\documentclass[12pt,a4paper]{amsart}
\usepackage{mathrsfs}
\usepackage{amsfonts}
\usepackage[active]{srcltx}
\usepackage[notref,notcite]{showkeys}
\usepackage{enumerate}
\usepackage{color}
\usepackage{amsmath,amssymb,xspace,amsthm}

\theoremstyle{definition}
\newtheorem{thm}{Theorem}[section]
\newtheorem{lem}[thm]{Lemma}
\newtheorem{defn}[thm]{Definition}
\newtheorem{cor}[thm]{Corollary}
\newtheorem{prop}[thm]{Proposition}
\newtheorem{rem}[thm]{Remark}

\numberwithin{equation}{section}

\def\spn{\mathrm{span}}

\newcommand{\lcm}{\operatorname{lcm}}

\def\Ind{\mathrm{Ind}}

\def\hom{\mathrm{Hom}}

\newcommand{\C}{\mathbb C}

\newcommand{\N}{\mathbb{N}}
\newcommand{\Z}{\mathbb{Z}}

\def\fZ{\mathfrak{Z}}

\renewcommand{\a}{\alpha}

\renewcommand{\d}{\delta}
\def\D{\Delta}
\def\tD{\widetilde{\D}}
\renewcommand{\l}{\lambda}

\renewcommand{\phi}{\varphi}
\newcommand{\eps}{\epsilon}

\def\ot{\otimes}

\def\Ind{\mathrm{Ind}}

\def\Ann{\mathrm{Ann}}
\def\Rad{\mathrm{Rad}}

\def\LL{\mathcal{L}}
\def\HH{\mathcal{H}}

\def\II{\mathcal{I}}
\def\GG{\mathcal{G}}

\def\UU{\mathcal{U}}

\def\fb{\mathfrak{b}}

\def\g{\mathfrak{g}}
\def\tg{\widetilde{\g}}
\def\hg{\widehat{\g}}
\def\tM{\widetilde{M}}
\def\hM{\widehat{M}}
\def\hW{\widehat{W}}
\def\tW{\widetilde{W}}
\def\tphi{\widetilde{\phi}}
\def\hphi{\widehat{\phi}}

\def\h{\mathfrak{h}}
\def\th{\widetilde{\h}}
\def\hh{\widehat{\h}}

\def\n{\mathfrak{n}}
\def\tn{\widetilde{\n}}

\def\hb{\widehat{\fb}}

\def\sl{\mathfrak{sl}}

\def\calE{\mathcal{E}}

\newcommand{\pl}{\pmb{\l}}
\newcommand{\pa}{\bold{a}}

\newcommand{\pmu}{\pmb{\mu}}

\def\brac#1#2{\langle \; #1 \,,\, #2 \,\rangle}
\def\gen#1{\langle \, #1 \,\rangle}


\title[untwisted  affine Kac-Moody algebras]{\bf Irreducible representations of untwisted  affine Kac-Moody algebras}
\author{Xiangqian Guo, Kaiming Zhao}

\date{}

\begin{document}
\maketitle

\begin{abstract}
In this paper we  construct a class of  new irreducible modules
over  untwisted  affine Kac-Moody algebras $\widetilde{\mathfrak{g}}$,  generalizing
and including both highest weight modules and Whittaker modules.
These modules allow us to obtain a complete classification of
irreducible $\widetilde{\mathfrak{g}}$-modules on which the action of each root vector in
$\widetilde{\mathfrak{n}}_+$ is locally finite, where $\widetilde{\mathfrak{n}}_+$ is the locally nilpotent
subalgebra (or positive part) of $\widetilde{\mathfrak{g}}$. The necessary and sufficient
conditions for two such irreducible $\widetilde{\mathfrak{g}}$-modules to be isomorphic
are also determined. In the second part of the paper,  we use the ``shifting technique" to obtain
a necessary and sufficient condition for the tensor product of
irreducible integrable loop $\widetilde{\mathfrak{g}}$-modules and irreducible integrable highest
weight $\widetilde{\mathfrak{g}}$-modules to be simple. This tensor product problem was
originally studied by Chari and Pressley 28
years ago.

 \vskip.5cm

\noindent {\bf Keywords:}  Affine Kac-Moody Lie algebras,
irreducible modules, Whittaker modules,  highest weight modules,
loop modules

\vskip 3pt
\noindent{\bf Mathematics Subject Classification} 2010: 17B05,
 17B30, 17B65, 17B67.
\end{abstract}


\section{Introduction}

Affine Lie algebras are the most extensively studied and most useful
ones among  infinite-dimensional Kac-Moody Lie algebras. Their
representation theory is as  rich as but quite different to that of
finite-dimensional simple Lie algebras. One difference is that,
affine Lie algebras have irreducible weight modules containing both
finite and infinite-dimensional nonzero weight spaces, that cannot
occur in the finite-dimensional simple Lie algebra case. In the
present paper we will construct irreducible modules over untwisted
affine Kac-Moody Lie algebras that do not have counterpart for
finite-dimensional simple Lie algebras.

Let $\g$ be a finite dimensional simple Lie algebra. We use $\tg$ to
denote the corresponding untwisted affine Kac-Moddy Lie algebra and let
$\hg=[\tg,\tg]$.

The integrable highest weight modules were the first class of
representations over affine Kac-Moody Lie algebras that have been
extensively studied, see [K] for detailed discussion of results and
further bibliography. In [Ch] Chari classified all irreducible
integrable weight modules with finite-dimensional weight spaces over
the untwisted affine Lie algebras. V. Chari and A. Pressley, [CP2],
then extended this classification to all affine Lie algebras. The
results of [Ch] and [CP2] state that every irreducible integrable
weight module with finite-dimensional weight spaces is either a
highest weight module or a loop module; in particular, the study of
loop modules was initiated. Some more general results on weight
irreducible modules over untwisted affine Lie algebras $\hg$ were
also obtained in \cite{Li}.

In the 1990's, V. Futorny began a comprehensive program studying
weight modules over arbitrary affine Lie algebras by taking
non-standard partitions of the root system; that is, partitions
which are not equivalent under the Weyl group to the standard
partition into positive and negative roots (see [DFG]). For affine
Lie algebras, there are always only finitely many equivalence
classes of such non-standard partitions (see [F4]). Corresponding to
each partition is a Borel subalgebra, and one can form
representations induced from one-dimensional modules for these Borel
subalgebras. These modules, often referred to as Verma-type modules,
were first studied by Jakobsen and Kac [JK], and then by Futorny
[F3, F4]. Results on the structure of Verma-type modules can also be
found in [Co, F1, FS]. For a nice exposition of Futorny's program,
see [F5].

Very recently, a complete classification for all simple weight
modules with finite-dimensional weight spaces over affine Lie
algebras were obtained in \cite{FT, DG}.
  Naturally, the next important task
is to study irreducible weight modules with infinite-dimensional
weight spaces and irreducible non-weight modules. The first examples were given by Chari and Pressley in \cite{CP3} by taking the tensor product of some irreducible integrable highest weight modules and integrable loop modules over affine Lie algebras.
 Besides these
irreducible weight modules, a  class of irreducible
weight modules over affine Lie algebras with infinite-dimensional
weight spaces were constructed  in [BBFK]. Another class of such irreducible
weight modules over the affine Lie algebra $A_1^{(1)}$  were constructed  in \cite{FGM}.
A complete classification
for all irreducible (weight and non-weight) modules over affine Lie
algebras with locally nilpotent action of the nilpotent radical was
obtained in [MZ]. In the paper [Chr], some irreducible non-weight
modules, called imaginary Whittaker modules, were constructed. The
structure of the usual Whittaker modules over the affine Kac-Moody
algebra $A_1^{(1)}$ was systematically studied in [ALZ], while this
study for other affine Kac-Moody algebras is still open.

The present
paper is to classify irreducible modules over untwisted affine Lie
algebras $\tg$ on which the action of each weight vector in $\tn_+$
is locally finite (not necessarily locally nilpotent), where $\tn_+$
is the locally nilpotent subalgebra (or positive part) of $\tg$, and
construct some new irreducible weight $\tg$-modules.

The paper is organized as follows. In Sect.2, we recall some basic
notations and results for later use. In Sect.3, using loop modules
$E(\pl,\pa)$ and some $\hg$-modules $L$ with a special property
(including highest weight modules and Whittaker modules) we
construct a class of new modules over untwisted affine Lie algebras
$\tg$ and study their properties (see Theorem 3.3). These modules
are in general non-weight modules. More precisely, let  $E$ be a
nontrivial irreducible evaluation $\g\ot\C[t,t^{-1}]$-module (such
modules were classified in \cite{CP2} and a more general case was
obtained in \cite{L}). Then we prove that the $\tg$-module
$\Ind_{\hg}^{\tg}(E\ot L)$ is irreducible. In Sect.4, we study
properties of irreducible $\tg$-modules $V$ on which the action of each root
vector in $\tn_+$ is locally finite (not necessarily locally
nilpotent). In Sect.5,
using our results in Sect.3 we classify all irreducible $\tg$-modules
on which the action of each root vector in $\tn_+$ is locally finite. They are precisely irreducible highest weight modules, irreducible Whittaker modules, the irreducible
modules $(E(\pl,\pa)\ot V(\gamma))[d]$ and $(E(\pl,\pa)\ot
W(\eta))[d]$ with $E(\pl,\pa)$ being finite dimensional (see Theorem
5.3). The last two classes  are special examples of the irreducible modules we
constructed in Sect.3. As a byproduct we see that the action of $\tn_+$ on these irreducible $\tg$-modules are also locally finite.
In Sect.6, we determine the necessary and
sufficient conditions for two such irreducible $\tg$-modules to be
isomorphic (see Theorem 6.3).

In 1987, Chari and Pressley  \cite{CP3} first gave  some sufficient
conditions for the tensor product of irreducible loop $\tg$-modules
and irreducible integrable  highest weight $\tg$-modules to be
simple. Then some other sufficient conditions were obtained in
\cite{CP4, Ad1, Ad2, Ad3}. This problem has been unsolved for 28
years.  In Sect.7,    we use the ``shifting technique" to obtain a
necessary and sufficient condition for  this tensor product to be
simple for untwisted affine Kac-Moody algebras (Theorem 7.9). From this  we  recover
   Admovic's result in \cite{Ad2}.

Throughout this paper, we denote by $\Z$, $\N$, $\Z_+$ and $\C$ the
sets of integers, positive integers, nonnegative integers and
complex numbers respectively. All vector spaces and Lie algebras are
over $\C$. For a Lie algebra $\GG$, we denote its universal
enveloping algebra by $U(\GG)$ and the center of $\GG$ by
$\fZ(\GG)$.

\section{Notation}  \label{pre}

In this section we will recall standard notions, basically  from \cite{K} and
\cite{BM}.

Let $\g$ be a finite dimensional simple Lie algebra of type $X_l$
over $\C$. For this $\g$ we   fix a root system $\Delta$ and a base   $\Pi$ of $\Delta$.
Denote by $\Delta_+$ and $\Delta_-$ the sets of positive
roots and negative roots respectively. Then $\g$ admits a triangular
decomposition $\g = \n_- \oplus \h \oplus \n_+$, where $\h$ is the
Cartan subalgebra and $\n_\pm$ are corresponding maximal nilpotent
subalgebras with respect to $\Pi$. Let $\sigma$ be a diagram
automorphism of $\g$. The order $r$ of $\sigma$ is $1$, $2$, or $3$
($r$ can be $2$ for $X_l = A_l, D_l, E_6$ and $r$ can be $3$ for
$X_l = D_4$.) Denote the eigenspace decomposition of $\g$ with
respect to $\sigma$ by $\g = \bigoplus \limits_{i=0}^{r-1} \g_i$, where we
put
\begin{equation*}
\g_i := \{\, x \in \g \mid \sigma(x) = \omega^i x \,\} \mbox{ for
} i =0,1, \ldots, r-1
\end{equation*}
with $\omega=\exp(\frac{2\pi\sqrt{-1}}{r}) \in \C$. For any vector
subspace $V$ of $\g$, denote $V \cap \g_i$ by $V_i$. For example,
$\n_{i,\pm} = \n_\pm \cap \g_i$ and $\h_i = \h \cap \g_i$. Then
$\g_0$ becomes a simple Lie algebra with $\h_0$ as its Cartan
subalgebra, see \cite{K} for more details.

Let us recall the definition of the affine Lie algebra $\tg$
associated to the pair $(\g,\sigma)$. Put $R_i := t^{i}\C[t^r,t^{-r}] \subset \C[t,t^{-1}]$. Not that $R_i=R_{r+i}$ for any $i\in\Z$.
Define the subalgebra $\LL_r(\g)$ of the loop algebra $\g \otimes \C[t,t^{-1}]$ by
\begin{equation*}
\LL_r(\g) := \bigoplus_{i=0}^{r-1} \g_i \otimes R_i,
\end{equation*}
which is called a (twisted) loop Lie algebra.
Let $d$ be the degree derivation $t\frac{d}{dt}$ acting on $\LL_r(\g)$.
Then $\LL_r(\g) \oplus \C d$ possesses a natural semidirect Lie algebra structure.
As a vector space, we define $\tg$ by
\begin{equation*}
\tg:= \LL_r(\g) \oplus \C K \oplus \C d.
\end{equation*}
Its Lie algebra structure is defined by
\begin{equation*}
[ x \otimes t^m, y \otimes t^n ] = [x,y] \otimes t^{m+n} + (x|y) m
\delta_{m+n,0} K, \quad [ d, x \otimes t^m ] = m x \otimes t^m
\end{equation*}
where $K$ is central in $\tg$, and $(x|y)$ is the Killing form of
$\g$. When $r=1$, $\tg$ is called an untwisted affine Lie algebra
and in the other cases ($r=2,3$), $\tg$ is called a twisted affine
Lie algebra. For convenience, we denote the subalgebra $$\hg:=
\LL_r(\g) \oplus \C K.$$

For any $0 \neq \alpha \in \h_0^*:=\hom_{\C}(\h_0,\C)$, we put
\begin{equation*}
\g_{i,\alpha} := \{x \in \g_{i} \mid [h,x] = \alpha(h)x, \ \forall\
h \in \h_0 \}
\end{equation*}
\begin{equation*}
\Delta_{i} := \{ 0 \neq \alpha \in \h_0^* \mid \g_{i,\alpha} \ne 0 \}
\end{equation*}
i.e., the set of nonzero weights of $\h_0$ on $\g_i$. Then, for
$\alpha \in \Delta_{i}$  we can write $\g_{i,\alpha}=\C
e_{i,\alpha}$ since $\dim \g_{i,\alpha}=1$ (see \cite{K}). Now we
can describe the root system $\tD $ and the root space decomposition of
$\tg$ with respect to $\th = \h_0 \oplus \C K\oplus \C d$ as
follows:
\begin{equation*}
\tD = \{\a + k \d : k \in r\Z +i, \a \in \D_i, i =0, 1, \ldots,
r-1\} \bigcup \{j \delta\,|\, j \in \Z \setminus \{0\}\};
\end{equation*}
\begin{equation*}
\tg= \th \oplus \Big(\bigoplus_{\gamma \in \tD} \tg_\gamma\Big)
\quad  \text{ with } \quad \tg_{\a + k\d} = \g_{\bar{k},\a}
\otimes t^k, \; \tg_{j\d} = \h_{\bar{j}} \otimes t^j
\end{equation*}
where $\d$ is the standard imaginary root and $\bar{k} =i$ if $k \in
r \Z+i$ for some $i=0,1,\ldots,r-1$. The set of positive roots of
$\tg$ is $$\tD_+ =\D_{0,+} \bigcup \{n\d\ |\ n \in \N\} \bigcup \{\a
+ n\d\ |\ \a \in \D_{\bar{n}}, n \in \N\}$$ where $\D_{0,+}$ is the
positive roots of $\g_0$ corresponding to $\n_{0,+}$. Now we have
\begin{equation*}
\tn_+ = \bigoplus_{\gamma \in \tD_+} \tg_\gamma =
\bigoplus_{i=1}^{r-1} (\g_i \otimes t^i\C[t^r]) \oplus (\g_0 \otimes
t^r\C[t^r]) \oplus (\n_{0,+} \otimes \C),
\end{equation*}
and the standard triangular decomposition
$$\tg = \tn_+ \oplus \th \oplus
\tn_-$$ where the meaning for $\tn_-$ is obvious.

For the subalgebra $\hg$ we can define similar notation only by
replacing $\th$ with $\hat{\h}=\h+\C K$. Using the triangular
decompositions, we can define highest weight modules and Whittaker
modules over $\tg$ and $\hg$. Let $\GG=\tg, \HH=\th$ (or sometimes $\GG=\hg,
\HH=\hat{\h}$).

\begin{defn} \label{highest weight}
Let $\gamma: \HH \rightarrow \C$ be a linear map and $V$ be a
$\GG$-module.
\begin{enumerate}
\item A nonzero vector $v \in V$ is called a {\it highest weight vector with highest weight $\gamma$}
if $xv=\gamma(x)v$ for all $x \in \HH$, and $\tn_+ v=0$.
\item The module $V$ is called a {\it  highest weight module with highest weight  $\gamma$ over $\GG$}
if it is generated by a   highest  weight  vector with highest weight  $\gamma$.
\end{enumerate}
\end{defn}
It is well known that irreducible highest weight modules are
determined uniquely by its highest weight. So we denote by
$V(\gamma)$ the irreducible highest weight module over $\GG$ (that
will be specified later) with highest weight $\gamma$.

\begin{defn} \label{Whittaker}
Let $\eta: \tn_+ \rightarrow \C$ be a nonzero Lie algebra
homomorphism and $W$ be a $\GG$-module.
\begin{enumerate}
\item A nonzero vector $v \in W$ is called a {\it Whittaker vector of type $\eta$}
if $xv=\eta(x)v$ for all $x \in \tn_+$.
\item $W$ is called a {\it type $\eta$ Whittaker module for $\GG$}
if it is generated by a type $\eta$ Whittaker vector.
\end{enumerate}
\end{defn}
We say that  the Lie algebra homomorphism $\eta: \tn_+ \rightarrow \C$ for   affine Lie algebras    $\tg$
is {\it nondegenerate} if $\eta(e_i)\ne0$ for each Chevalley
generator $e_i$ of $\tn_+$.   We denote by $W(\eta)$ any one of  the
irreducible Whittaker module of type $\eta$ in this case (it may not
be unique).

The structure of irreducible Whittaker modules $W(\eta)$ over the affine Lie algebra
$A_1^{(1)}$ was thoroughly studied in \cite{ALZ}, while it is unclear for other affine Lie algebras.

\section{Construction of irreducible modules}

In this section we construct a class of new modules over untwisted
affine Lie algebras $\tg$ and study their properties. So we assume
that $r=1$ in the rest of the paper.

Let $\widetilde{\C^m}$ be the set of all vectors
$\pa=(a_1,\cdots,a_m)\in\C^m$ with all $a_1, \cdots, a_m$ being
nonzero and pairwise distinct. Take $\pa\in \widetilde{\C^m}$ and
set $f_{\pa}(t)=(t-a_1)\cdots (t-a_m)$. It is well-known that the
quotient Lie algebra
\begin{equation}\label{L_a}\LL_{\pa}(\g)= \frac{\g\ot \C[t^{\pm1}]}{\g\ot
f_{\pa}(t)\C[t^{\pm1}]}\end{equation}
 is the
direct sum of $m$ copies of $\g$ and hence is semisimple. Actually
$\LL_{\pa}(\g)=\oplus_{i=1}^m\LL_{\pa}^{(i)}(\g),$ where
\begin{equation}\label{LL_i}\LL_{\pa}^{(i)}(\g)=\g\otimes
\left(\frac{f_{\pa}(t)}{(t-a_i)\prod_{j\ne
i}(a_i-a_j)}\right)\simeq\g.\end{equation}  This is
because
$$\left(\frac{f_{\pa}(t)}{(t-a_i)\prod_{j\ne
i}(a_i-a_j)}\right)^2\equiv \frac{f_{\pa}(t)}{(t-a_i)\prod_{j\ne
i}(a_i-a_j)}\mod (f_{\pa}(t)).$$
We note that $\LL_\pa$ inherit a standard triangular
decomposition and root system from $\g$. In particular,
$\h_\pa=\h\ot (\C[t^{\pm1}]/(f_{\pa}(t)\C[t^{\pm1}]))$ is the Cartan
subalgebra of $\LL_{\pa}(\g)$. Within the quotient Lie algebra
$\LL_{\pa}(\g)$, we have

\begin{lem}\label{iso}
For any $k\in\N$, we have the following canonical isomorphism of
vector spaces
\begin{equation}\label{LL_a(g)}
\frac{\g\ot t^k\C[t]}{(\g\ot t^k\C[t])\cap (\g\ot
f_{\pa}(t)\C[t^{\pm1}])}\cong \frac{\g\ot \C[t^{\pm1}]}{\g\ot
f_{\pa}(t)\C[t^{\pm1}]}.
\end{equation}
\end{lem}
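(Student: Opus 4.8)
The plan is to show that both sides of the claimed isomorphism are quotients of $\g \ot t^k\C[t]$ and that the natural inclusion map induces a linear isomorphism. Write $f_{\pa}(t) = (t-a_1)\cdots(t-a_m)$, a degree-$m$ polynomial with nonzero constant term (since each $a_j \neq 0$). The quotient $\LL_{\pa}(\g) = (\g\ot\C[t^{\pm1}])/(\g\ot f_{\pa}(t)\C[t^{\pm1}])$ is $m$ copies of $\g$, so it has dimension $m\dim\g$ over $\C$. Since the left-hand side of \eqref{LL_a(g)} is manifestly a quotient of a subspace of $\g\ot\C[t^{\pm1}]$, the whole statement will reduce to showing that the composite map
\begin{equation*}
\g\ot t^k\C[t] \hookrightarrow \g\ot\C[t^{\pm1}] \twoheadrightarrow \LL_{\pa}(\g)
\end{equation*}
is surjective with kernel exactly $(\g\ot t^k\C[t])\cap(\g\ot f_{\pa}(t)\C[t^{\pm1}])$. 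The kernel statement is automatic, so the content is surjectivity.

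First I would reduce to a statement purely about the polynomial algebra $\C[t^{\pm1}]$, since everything is $\g$-linear: it suffices to prove that the composite $t^k\C[t] \hookrightarrow \C[t^{\pm1}] \twoheadrightarrow \C[t^{\pm1}]/(f_{\pa}(t)\C[t^{\pm1}])$ is surjective, i.e.\ that $t^k\C[t] + f_{\pa}(t)\C[t^{\pm1}] = \C[t^{\pm1}]$. Once this is established one tensors with $\g$. The key observation is that since $f_{\pa}(0) = (-1)^m a_1\cdots a_m \neq 0$, the polynomial $f_{\pa}(t)$ is invertible modulo $t$; more usefully, $t$ and $f_{\pa}(t)$ are coprime in $\C[t]$. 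Hence $f_{\pa}(t)\C[t^{\pm1}] = f_{\pa}(t)\C[t]\cdot\C[t^{\pm1}]$ already contains $t^{-1}$ times a unit, so the ideal $f_{\pa}(t)\C[t^{\pm1}]$ in the Laurent ring meets every power of $t$.

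The main step is the surjectivity, and I would argue as follows. By coprimality of $t^N$ and $f_{\pa}(t)$ in $\C[t]$ for any $N$, there exist $p,q \in \C[t]$ with $p(t)\,t^N + q(t)\,f_{\pa}(t) = 1$; multiplying by any monomial $t^j$ shows that every $t^j$ with $j\ge 0$ lies in $t^N\C[t] + f_{\pa}(t)\C[t]$ provided $N \le j$ is arranged appropriately, and for the negative powers one uses that $f_{\pa}(t)\C[t^{\pm1}]$ contains $f_{\pa}(t)\cdot t^{-n}$ for all $n$, which combined with a high positive power of $t$ from $t^k\C[t]$ generates $t^{-n}$ modulo the ideal. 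Concretely, since $f_{\pa}(t)$ has an invertible image in the localization and in each factor $\C[t]/(t-a_j)$, the Chinese Remainder Theorem gives $\C[t^{\pm1}]/(f_{\pa}(t)) \cong \bigoplus_{j=1}^m \C[t^{\pm1}]/(t-a_j) \cong \C^m$, where the evaluation $t\mapsto a_j$ makes sense because $a_j\neq 0$. Under this identification, the image of $t^k\C[t]$ is spanned by the evaluation vectors $(a_1^{k+s},\dots,a_m^{k+s})$ for $s\ge 0$; these span all of $\C^m$ precisely because the $a_j$ are pairwise distinct and nonzero, via the Vandermonde determinant $\prod_{i<j}(a_i - a_j)\neq 0$ (the factor $a_1^k\cdots a_m^k$ pulled out is nonzero).

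The step I expect to be the main obstacle is handling the negative powers of $t$ cleanly, since $t^k\C[t]$ lives entirely in nonnegative degrees while the target $\C[t^{\pm1}]/(f_{\pa}(t)\C[t^{\pm1}])$ is a quotient of the full Laurent ring; the Chinese Remainder / evaluation reformulation is exactly what dissolves this difficulty, because in $\C^m$ there is no longer any notion of positive versus negative degree and the Vandermonde argument closes the surjectivity in one stroke. After surjectivity, dimension counting ($\dim_{\C}$ of the left side is at most $m\dim\g$ since it surjects onto the $m\dim\g$-dimensional right side, and surjectivity forces equality) upgrades the surjective linear map to the asserted isomorphism, and naturality is clear since every map in sight is induced by the canonical inclusion of $\g\ot t^k\C[t]$ into $\g\ot\C[t^{\pm1}]$.
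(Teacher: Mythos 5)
Your proof is correct, but it takes a genuinely different route from the paper's. The paper's proof is a short chain of canonical isomorphisms: it first replaces the intersection $(\g\ot t^k\C[t])\cap(\g\ot f_{\pa}(t)\C[t^{\pm1}])$ by $(\g\ot t^k\C[t])\cap(\g\ot f_{\pa}(t)\C[t])$ (legitimate because $f_{\pa}$ has nonzero constant term), then applies the second isomorphism theorem to rewrite the left-hand side as $\bigl(\g\ot t^k\C[t]+\g\ot f_{\pa}(t)\C[t]\bigr)/\bigl(\g\ot f_{\pa}(t)\C[t]\bigr)$, observes that the numerator is all of $\g\ot\C[t]$ because $t^k$ and $f_{\pa}(t)$ are coprime in $\C[t]$ (Bezout), and finally passes from $\C[t]$ to $\C[t^{\pm1}]$ using the same coprimality. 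You instead apply the first isomorphism theorem once, to the composite $\g\ot t^k\C[t]\hookrightarrow\g\ot\C[t^{\pm1}]\twoheadrightarrow\LL_{\pa}(\g)$, and prove surjectivity by identifying $\C[t^{\pm1}]/(f_{\pa}(t))$ with $\C^m$ via the Chinese Remainder Theorem and the evaluations $t\mapsto a_j$, then invoking a nonvanishing Vandermonde determinant. Both arguments are sound, and both produce the same canonical map, namely the one induced by inclusion. The paper's version is shorter and uses only that every $a_j\ne0$; it never needs the $a_j$ to be pairwise distinct, so it actually proves the statement for any $f$ with $f(0)\ne0$. Your version does use distinctness (for CRT into linear factors and for the Vandermonde step), but in exchange it makes the target explicit as $m$ copies of $\g$ realized by evaluation, which is exactly the picture of $\LL_{\pa}(\g)$ and of the evaluation modules $E(\pl,\pa)$ that the rest of the paper relies on. Two minor points: your closing ``dimension counting'' is redundant, since once the kernel is identified and surjectivity is established the first isomorphism theorem already yields the isomorphism; and the Bezout manipulations in the middle of your second paragraph can be deleted outright, as the CRT/Vandermonde argument that follows supersedes them.
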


\begin{proof} This is the composition of the following
obvious isomorphisms:

\begin{equation*}\frac{\g\ot t^k\C[t]}{(\g\ot t^k\C[t])\cap (\g\ot
f_{\pa}(t)\C[t^{\pm1}])} = \frac{\g\ot t^k\C[t]}{(\g\ot t^k\C[t])\cap
(\g\ot f_{\pa}(t)\C[t])}$$
$$ \cong \frac{\g\ot t^k\C[t]+\g\ot f_{\pa}(t)\C[t]}{\g\ot f_{\pa}(t)\C[t]}  \cong \frac{\g\ot \C[t]}{\g\ot f_{\pa}(t)\C[t]}
 \cong \frac{\g\ot \C[t^{\pm1}]}{\g\ot
 f_{\pa}(t)\C[t^{\pm1}]}.\qedhere\end{equation*}
\end{proof}

For $\pa=(a_1,\cdots, a_m)\in\widetilde{\C^m}$ and
$\pl=(\l_1,\cdots, \l_m)\in (\h^*)^m$ where $\h^*$ is the dual space
of $\h$, let us first recall from \cite{CP1} the evaluation module
$E(\pl,\pa)$  over $\g\ot \C[t^{\pm1}]$. Let $V(\lambda_i)$ be the
irreducible highest weight module over the finite-dimensional simple
Lie algebra $\g$ with highest weight $\lambda_i$. The action on the
module $E(\pl,\pa)=\bigotimes\limits_{i=1}^mV(\lambda_i) $ is as follows
\begin{equation}\label{action}(x\ot t^k)(\bigotimes\limits _{i=1}^mv_i)=\sum_{i=1}^m a_i^k(v_1\ot...\ot xv_i\ot...\ot v_m),\end{equation}
for any $x\in\g, k\in\Z$. Then $E(\pl,\pa)$ is an irreducible module
over the Lie algebra $\hg=\g\ot\C[t^{\pm1}]\oplus\C K$ with trivial
action of $K$.

Note that $(\g\ot f_{\pa}(t)\C[t^{\pm1}])\oplus\C K\subseteq
\Ann(E(\pl,\pa))$, so we can regard $E(\pl,\pa)$ as a module over
$\LL_{\pa}(\g)\cong\hg/\big((\g\ot f_{\pa}(t)\C[t^{\pm1}])\oplus\C
K\big)$. Thus we can consider the weight vectors and weight spaces
of $E(\pl,\pa)$ with respect to  the Cartan subalgebra $\h_\pa$ 
of $\LL_{\pa}(\g)$. Identify $\h_\pa^*$ with $(\h^*)^m$ via
\eqref{L_a} and \eqref{LL_i}, for example, a weight vector $v\in
E(\pl,\pa)$ of weight $\pmu=(\mu_1,\cdots,\mu_m)\in(\h^*)^m$
satisfies $(h\ot
f(t))v=\sum \limits_{i=1}^m\mu_i(h_i)f(a_i)v$. We will use these notations freely in the rest of the paper. 

\begin{rem}\label{remark} We notice that $(\g\ot f_{\pa}(t))\oplus\C K\neq
\Ann(E(\pl,\pa))$ in general.  Indeed if we take
$f_{\pl,\pa}(t)=\prod \limits_{\l_i\neq 0}(t-a_i)$, then $(\g\ot
f_{\pl,\pa}(t)\C[t^{\pm1}])\oplus\C K=\Ann(E(\pl,\pa))$. (The
equality can be verified by the fact that any simple ideal of
$\LL_{\bar{\pa}}(\g)$, for example $\Ann(E(\pl,\pa))/(\g\ot
f_{\pl,\pa}\C[t^{\pm1}])$, is one of $\LL_i$ in\eqref{LL_i}. The
equality will be used in the proof of Lemma \ref{iso_E ot L}.)
Hence we can regard $E(\pl,\pa)$ as a module over
$\LL_{\bar{\pa}}(\g)\cong\hg/\Ann(E(\pl,\pa))$ and moreover, we have
$E(\pl,\pa)\cong E(\bar{\pl},\bar{\pa})$, where $\bar{\pl}$ and
$\bar{\pa}$ are obtained by removing the entries $\l_i$ and $a_i$
with $\l_i=0$ from $\pl$ and $\pa$ respectively.
\end{rem}

\begin{thm}\label{tensor} Let $L$ be a module over $\hg$ such that for any $v\in L$ we have
$(\g\ot t^k\C[t])v=0$ for sufficiently large $k\in\N$. Let
$\pa\in\widetilde{\C^m}$, $\pl\in (\h^*)^m$. Then any
$\hg$-submodule of $E(\pl,\pa)\ot L$ is of the form $E(\pl,\pa)\ot
N$ for   some $\hg$-submodule $N$ of $L$.
\end{thm}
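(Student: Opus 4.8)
The plan is to fix a nonzero $\hg$-submodule $M\subseteq E(\pl,\pa)\ot L$ and define
$$N=\{\ell\in L\ :\ E(\pl,\pa)\ot\ell\subseteq M\}.$$
This $N$ is clearly a subspace of $L$, and $E(\pl,\pa)\ot N\subseteq M$ holds by construction, so the whole statement reduces to two claims: that $N$ is an $\hg$-submodule, and that every element of $M$ already lies in $E(\pl,\pa)\ot N$. The heart of the argument is a purification step showing that $M$ is spanned by pure tensors $e\ot\ell$ with $\ell\in N$.

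For the purification I would take an arbitrary nonzero $w\in M$ and write it in reduced form $w=\sum_{j=1}^n e_j\ot\ell_j$ with both $\{e_1,\dots,e_n\}\subseteq E(\pl,\pa)$ and $\{\ell_1,\dots,\ell_n\}\subseteq L$ linearly independent (the usual tensor-rank normalization). Since only finitely many $\ell_j$ occur and $L$ satisfies the local finiteness hypothesis, I can choose $k\in\N$ so large that $(\g\ot t^k\C[t])\ell_j=0$ for every $j$. Then for any $u\in U(\g\ot t^k\C[t])$ the second tensor factor contributes nothing and the diagonal action collapses to an action on the first factor alone,
$$u\cdot w=\sum_{j=1}^n (u\cdot e_j)\ot\ell_j,$$
where $u$ acts on the $e_j$ through the composite $\g\ot t^k\C[t]\to\LL_{\pa}(\g)$. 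By Lemma~\ref{iso} this map is surjective, so as $u$ ranges over $U(\g\ot t^k\C[t])$ the induced operators on $E(\pl,\pa)$ run through the full image of $U(\LL_{\pa}(\g))$ in $\mathrm{End}_{\C}(E(\pl,\pa))$. Because $E(\pl,\pa)$ is a finite-dimensional irreducible $\LL_{\pa}(\g)$-module and $\C$ is algebraically closed, the Jacobson density (Burnside) theorem identifies this image with all of $\mathrm{End}_{\C}(E(\pl,\pa))$.

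Consequently, for each index $j_0$ I can pick $T\in\mathrm{End}_{\C}(E(\pl,\pa))$ with $Te_{j_0}$ an arbitrary prescribed vector and $Te_j=0$ for $j\neq j_0$ (possible since the $e_j$ are linearly independent), and realize $T$ as the action of some such $u$; this yields $e\ot\ell_{j_0}\in M$ for every $e\in E(\pl,\pa)$, i.e. $\ell_{j_0}\in N$. Thus all components of $w$ lie in $N$, giving $w\in E(\pl,\pa)\ot N$ and hence $M\subseteq E(\pl,\pa)\ot N$. Finally, to see that $N$ is $\hg$-stable I would take $\ell\in N$ and any $x\ot t^s\in\hg$: for every $e\in E(\pl,\pa)$ the identity $(x\ot t^s)(e\ot\ell)=((x\ot t^s)e)\ot\ell+e\ot((x\ot t^s)\ell)\in M$ together with $((x\ot t^s)e)\ot\ell\in M$ forces $e\ot((x\ot t^s)\ell)\in M$, whence $(x\ot t^s)\ell\in N$ (the central $K$ is handled identically since it acts by $0$ on $E(\pl,\pa)$). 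This yields $M=E(\pl,\pa)\ot N$ with $N$ an $\hg$-submodule. The main obstacle is precisely the decoupling of the diagonal action: it is the local finiteness of $L$ that kills the second-factor contribution in high degree, while Lemma~\ref{iso} guarantees that passing to high degree loses none of the $\LL_{\pa}(\g)$-action needed to invoke density on the first factor.
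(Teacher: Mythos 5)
Your overall strategy is essentially the paper's own proof: write the element in reduced tensor form, pass to $\g\ot t^k\C[t]$ to kill the second factor, use Lemma~\ref{iso} to see that no part of the action on the first factor is lost, apply a density theorem to separate the components $e_j\ot\ell_j$, define $N=\{\ell : E(\pl,\pa)\ot\ell\subseteq M\}$, and finish with the Leibniz-rule argument for $\hg$-stability of $N$. However, there is one genuine error in your density step: you assert that $E(\pl,\pa)$ is a \emph{finite-dimensional} irreducible $\LL_{\pa}(\g)$-module and invoke Burnside's theorem to conclude that the image of $U(\LL_{\pa}(\g))$ is all of $\mathrm{End}_{\C}(E(\pl,\pa))$. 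In this theorem the weights $\pl\in(\h^*)^m$ are arbitrary — they are not assumed dominant integral — so each $V(\l_i)$ is in general an infinite-dimensional irreducible highest weight $\g$-module and $E(\pl,\pa)$ is infinite dimensional; Burnside's theorem does not apply. (Finite dimensionality of $E(\pl,\pa)$ is only imposed later, in the classification of Section 5.) As written, your proof covers only the special case where all $\l_i$ are dominant integral.

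The gap is repairable, and the repair is exactly the paper's route: since $U(\hg)$ is countably generated and $\C$ is uncountable and algebraically closed, $\mathrm{Hom}_{U(\hg)}(E(\pl,\pa),E(\pl,\pa))\cong\C$ (Dixmier, Proposition 2.6.5), and then the general Jacobson Density Theorem shows that $U(\hg)/\ann_{U(\hg)}(E(\pl,\pa))$ acts as a dense ring of $\C$-linear endomorphisms of $E(\pl,\pa)$. Density on the span of the finitely many linearly independent vectors $e_1,\dots,e_n$ is all you need to produce $u$ with $u e_j=\delta_{j,j_0}e_{j_0}$ (or any prescribed values), after which your argument goes through verbatim — including the point, which you do handle correctly via Lemma~\ref{iso}, that the image of $U(\g\ot t^k\C[t])$ in $\mathrm{End}_{\C}(E(\pl,\pa))$ coincides with that of $U(\hg)$. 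The moral is that in infinite dimensions the scalar-endomorphism property is not automatic and is precisely where countable generation of $U(\hg)$ must be cited; it cannot be replaced by an appeal to Burnside.
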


\begin{proof} Let $W$ be a  nonzero $\hg$-submodule of $ E(\pl,\pa)\ot L$ and
take any nonzero element $w\in W$.   We can write $w=\sum \limits_{i=1}^n
v_i\ot u_i$ for some nonzero $v_i\in E(\pl,\pa)$ and nonzero $u_i\in
L$. Fix one such expression for $w$ with $n$ minimal. We see that
$v_1, v_2, ..., v_n$ are linearly independent. There exists $k\in\N$
such that $(\g\ot t^j) u_i=0$ for all $j\geq k$ and $i=1,\cdots, n$.
Thus for any $x\in \g\ot t^k\C[t]$, we have $xw=\sum \limits_{i=1}^n (xv_i
)\ot u_i\in W.$ From Lemma 3.1 we deduce that
\begin{equation}\label{action_geq k}
\sum_{i=1}^n (xv_i )\ot u_i\in W,\ \forall\ x\in U(\hg).
\end{equation}

It is well known that any endomorphism of an irreducible module over
a countably generated associative $\C$-algebra is a scalar
(Proposition 2.6.5 and Corollary 2.6.6 in [D]). Thus ${\rm
Hom}_{U(\hg)}(E(\pl,\pa),E(\pl,\pa))\cong \C$ since $E(\pl,\pa)$ is
irreducible  over the finite dimensional Lie algebra
$\LL_\pa(g)$. Note that $E(\pl,\pa)$ is a faithful irreducible
module over $U(\hg)/\Ann_{U(\hg)}(E(\pl,\pa) )$. From the Jacobson
Density Theorem (Page 197, [J])  or Proposition 2.6.5 in
[D], we know that $U(\hg)/\Ann_{{\small U(\hg)}}(E(\pl,\pa) )$ is
isomorphic to a dense ring of endomorphisms of the $\C$-vector space
$E(\pl,\pa)$. Then there exists $x_i\in U(\hg)$ such that
$x_iv_j=\delta_{i,j}v_j$ for all $i,j=1,\cdots,n$. Taking $x=x_i$ in
\eqref{action_geq k}, we deduce that $v_i\ot u_i\in W$. Again using
\eqref{action_geq k} for $n=1$, we get $E(\pl,\pa)\ot u_i\subseteq
W$ for $i=1,\cdots,n$. Thus we deduce that if $\sum \limits_{i=1}^n v_i\ot
u_i\in W$ for linearly independent $v_i\in E(\pl,\pa)$ then
$E(\pl,\pa)\ot u_i\subseteq W$ for each $i$.

Let $$N=\{u\in L\ |\ v\ot u\subseteq W \text{ for some nonzero }v\in E(\pl,\pa)\}.$$ From the above
arguments we know that $N$   is a nonzero subspace of $L$ and $E(\pl,\pa)\otimes N\subset W$. Then for any nonzero $v\in E(\pl,\pa)$,
$u\in N$ and $x\in \hg$, we deduce that $x(v\ot u)=(xv)\ot u+v\ot
(xu)\in W$. Since $(xv)\ot u\in W$, we have $v\ot (xu)\in W$. Thus $xu\in N$. In
other words, $N$ is a $\hg$-submodule of $L$. Combining this with the result established in the previous paragraph we see that
$W=E(\pl,\pa)\ot N$. Noticing that the action of $K$ is central, the
lemma follows immediately.
\end{proof}

We point out that the statement in Lemma 3.3 for irreducible
integrable highest weight module $L$ and finite dimensional $E(\pl,\pa)$ is  Theorem 4.2 of
\cite{CP3}.

For any $\hg$-module $M$, we can define an induced $\tg$-module
$$M[d]=\Ind_{\hg}^{\tilde{\g}}M.$$
It is easy to see that $M[d]=\C[d]\ot M$ as vector spaces. We will
simply write $d^nv=d^n\ot v$ for all $n\in\Z_+$ and $v\in M$. Now we
can prove the following 

\begin{thm}\label{E ot L[d]} Let $\pa\in\widetilde{\C^m}$ and $\pl\in (\h^*)^m\setminus\{0\}$.
Let $L$ be a module over $\hg$ such that for any $v\in L$ we have
$(\g\ot t^k\C[t])v=0$ for sufficiently large $k\in\N$. Then any
$\tg$-submodule of $(E(\pl,\pa)\ot L)[d]$ is of the form
$(E(\pl,\pa)\ot N)[d]$ for  some $\hg$-submodule $N$ of $L$.
\end{thm}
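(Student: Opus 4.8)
The plan is to combine the two earlier results, Theorem \ref{tensor} and Theorem \ref{M[d]}, by mimicking the $d$-filtration argument used in the proof of Theorem \ref{M[d]}, but now keeping track of an arbitrary $\hg$-submodule $N$ of $L$ rather than working with an irreducible $L$. Set $M = E(\pl,\pa)\ot L$, let $W$ be a nonzero $\tg$-submodule of $M[d]$, and introduce the same filtration $M^{(n)} = \sum_{i=0}^n d^i\ot M$, so that $M^{(n)}/M^{(n-1)}\cong M$ as $\hg$-modules. The key structural fact I will lean on is that, by Theorem \ref{tensor}, every $\hg$-submodule of $M = E(\pl,\pa)\ot L$ has the shape $E(\pl,\pa)\ot N$ for an $\hg$-submodule $N\subseteq L$; this is what lets me reduce statements about $\hg$-submodules of $M$ to statements about submodules of $L$.

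First I would define the candidate $N$ directly. For each $n\in\Z_+$, consider the set of elements $u\in L$ such that $W$ contains a vector congruent to $d^n\ot(v_{\pl}\ot u)$ modulo $M^{(n-1)}$, together with the appropriate lower-order tail; more cleanly, I would project $W$ onto each graded piece $M^{(n)}/M^{(n-1)}\cong M = E(\pl,\pa)\ot L$ and collect the resulting $\hg$-submodules. Each such projection is an $\hg$-submodule of $M$, hence by Theorem \ref{tensor} equals $E(\pl,\pa)\ot N_n$ for some $\hg$-submodule $N_n\subseteq L$. The heart of the argument is to show all the $N_n$ coincide with a single $N$, and that $W$ is then exactly $(E(\pl,\pa)\ot N)[d]$. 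The inclusion $(E(\pl,\pa)\ot N)[d]\subseteq W$ should follow because $W$ is closed under the $\hg$-action and under multiplication by $d$ (as $dw\in W$ for $w\in W$), so once a leading term $d^n\ot(v_{\pl}\ot u)$ is shown to lie in $W$ up to lower order, one can strip off lower-order terms inductively.

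The main obstacle, exactly as in Theorem \ref{M[d]}, is the lower-order correction terms: acting by $h\ot t^k$ on a vector whose leading term is $d^n\ot(v_{\pl}\ot u_0)$ produces, via $[d^n, h\ot t^k]$, a term of the form $-nk\sum_i a_i^k\l_i(h)\,d^{n-1}\ot(v_{\pl}\ot u_0)$ in degree $n-1$, alongside the genuinely lower terms. I would reuse precisely the computation displayed in equation (3.2) and its consequence: since $\pl\neq 0$, for any nonzero $u_0$ one can choose $h\in\h$ and $k\in\N$ with $\sum_i a_i^k\l_i(h)\neq 0$, which produces a nonzero vector $w'\in (W\cap M^{(n-1)})\setminus\{0\}$ whose leading term is again $v_{\pl}\ot u_0$ (now in degree $n-1$). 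Iterating lets me descend to degree $0$ and conclude that $v_{\pl}\ot u_0\in W$, hence $u_0\in N$; combined with the weight-separation argument (terms with $\pmu_j\neq\pl$ are isolated by choosing $h,k$ so that $\sum_i a_i^k(\mu_{j,i}(h)-\l_i(h))\neq0$), this forces the leading $u_0$ and each lower tail to lie in the same $N$.

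Finally, to assemble the global statement I would verify two things. First, the descent shows that if $W$ meets degree $n$ with some leading $u_0$, then $E(\pl,\pa)\ot u_0\subseteq W$ in degree $0$, and then applying $d$ repeatedly gives $d^j\ot(E(\pl,\pa)\ot u_0)$ up to lower order for all $j\le n$; an induction on the filtration degree then establishes $W = (E(\pl,\pa)\ot N)[d]$ with $N = N_0$ the degree-zero piece. Second, I must confirm $N$ is genuinely an $\hg$-submodule of $L$, which follows as in the proof of Theorem \ref{tensor}: for $u\in N$, $v\in E(\pl,\pa)$, and $x\in\LL_{\pa}(\g)$, one has $x(v\ot u)=(xv)\ot u + v\ot(xu)\in W$ with $(xv)\ot u\in W$, so $v\ot(xu)\in W$ and thus $xu\in N$, the action of $K$ being central. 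The only subtlety I expect to need care with is the bookkeeping of which degree each correction term lands in, but structurally it is the same machinery as Theorem \ref{M[d]} applied level by level rather than only to the top level.
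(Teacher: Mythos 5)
Your proposal is correct in substance but organizes the induction differently from the paper, and the difference is a real one. The paper does not invoke Theorem \ref{tensor} as a black box inside this proof; instead it proves directly, by a double induction (on the top $d$-degree $n$ and on the number $l_n$ of leading tensor terms), the claim that $(E(\pl,\pa)\ot u_{j,i})[d]\subseteq W$ for \emph{every} tensor component of every element of $W$, splitting into three cases: one leading term (reduced to leading vector $v_{\pl}$ via a Jacobson-density element $x\in U(\g\ot t^{k'}\C[t])$), several leading terms of equal weight (again handled by density), and several leading terms of distinct weights (handled by weight separation). Your route instead applies Theorem \ref{tensor} to the image of $W\cap M^{(n)}$ in $M^{(n)}/M^{(n-1)}\cong M$, getting $E(\pl,\pa)\ot N_n$ at each level, and then only needs the commutator computation $[d^n,h\ot t^k]$ to show the $N_n$ all coincide. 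This buys a genuine simplification: since the degree-$n$ projection is the \emph{full} tensor product $E(\pl,\pa)\ot N_n$, you may always choose a representative in $W\cap M^{(n)}$ whose top component is exactly $v_{\pl}\ot u_0$, so the paper's Case 2 (and its density argument) never arises for you.

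One step needs repair, though it is fixable inside your own framework. After applying $h\ot t^k-\pl(h\ot t^k)$, the resulting vector of $W\cap M^{(n-1)}$ does \emph{not} have leading term ``again $v_{\pl}\ot u_0$'': its degree-$(n-1)$ component is a nonzero multiple of $v_{\pl}\ot u_0$ \emph{plus} terms $v_j\ot u_j$ with $\pmu_j\neq\pl$ inherited from the tail of the original vector, so you cannot literally iterate the descent on this same vector (a second application of $h'\ot t^{k'}-\pl(h'\ot t^{k'})$ kills the $v_{\pl}\ot u_0$ part in degree $n-1$ and leaves only the junk there). The correct iteration is: from this one descent step, use linear independence of weight vectors of distinct weights together with the fact that the degree-$(n-1)$ projection lies in $E(\pl,\pa)\ot N_{n-1}$ to conclude $u_0\in N_{n-1}$; then pick a \emph{fresh} representative in $W\cap M^{(n-1)}$ with top component exactly $v_{\pl}\ot u_0$ and repeat. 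With that adjustment no weight-separation case analysis is needed at all, and the rest of your outline (both inclusions for $W=(E(\pl,\pa)\ot N)[d]$, and $N$ being an $\hg$-submodule, which is in fact automatic from Theorem \ref{tensor}) goes through.
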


\begin{proof} Denote $M=E(\pl,\pa)\ot L$ for short.
Let $M^{(n)}=\sum\limits_{i=0}^n d^i\ot M$ for all $n\in\Z_+$ and
$M^{(n)}=0$ for $n<0$. Similarly as in Theorem \ref{M[d]}, we have
$$(x\ot t^k)(d^n v)=d^n (x\ot t^k) v - [d^n, x\ot t^k] v,$$ for all $x\in
\g, v\in M, k\in\Z, n\in\N,$ which induces a $\hg$-module
isomorphism
$$M^{(n)}/M^{(n-1)}\rightarrow M, \quad d^n v\mapsto v.$$

Let $W$ be  a nonzero submodule of $M[d]$ and take a nonzero $w\in
W$. Suppose that $w\in W\cap M^{(n)}$.  Then we can write $w$ as
$$w=\sum_{j=0}^n d^{j}\sum_{i=1}^{l_{j}}(v_{j,i}\ot u_{j,i}),$$
 where $v_{j,i}\in E(\pl,\pa), u_{j,i}\in L, l_j\in\N,
j=1,\cdots,n$ such that $v_{j,1},\cdots, v_{j,l_j}$ are linearly
independent weight vectors for each $j$. We may assume
that   $v_{j,i}$ has
 weight
$\pmu_{j,i}\in(\h^*)^m$. 
We note that there exists $k'\in\N$ such that $(\g\ot t^{k'}\C[t])
u_{j,i}=0$ for all $i=1,\cdots, l_j$ and $j=1,\cdots,n$. \vskip5pt

\noindent {\bf Claim.} $(E(\pl,\pa)\ot u_{j,i})[d]\subseteq W$ for
all $i,j$. \vskip5pt

We prove this by induction on $n$ and $l_n$. If $n=0$ and $l_0=1$ the result is clear from \eqref{LL_a(g)} and
\eqref{action_geq k}. Now we fix some $n\in\Z_+$ and $l_n\in\N$.
Suppose that the claim holds for smaller $n$ or for the same $n$ and
smaller $l_n$.
\vskip5pt

\noindent {\bf Case 1.} $l_n=1$.\vskip5pt

We may assume $n\geq 1$. Take any $x\in U(\g\ot t^{k'}\C[t])$ such
that $xv_{n,1}=v_{\pl}$, the nonzero highest weight vector in
$E(\pl,\pa)$. Then we have $xu_{j,i}=0$ for all $i=1,\cdots, l_j$
and $j=1,\cdots,n$ and $xw\equiv d^n(v_{\pl}\ot u_{n,1})\mod
M^{(n-1)}$. By replacing $w$ with $xw$ (noticing that $u_{n, 1}$ has
not been changed) we may assume that
$$w=d^n(v_{\pl}\ot u_{n,1})+\sum_{j=1}^{n-1}
d^{j}\sum_{i=1}^{l_{j}}(v_{j,i}\ot u_{j,i})$$ with $\pmu_{n,1}=\pl$.
 Without loss of generality, we may assume that either
$v_{\pl}, v_{n-1,1},\cdots,v_{n-1,l_{n-1}}$ are linearly
independent, or $v_{n-1,1}=v_{\pl}$. For any $h\in\h$ and $k\geq
k'$ we can compute that
\begin{equation*} \begin{split}
(h\ot t^k)w = & (h\ot t^k) d^{n}(v_{\pl}\ot u_{n,1})+(h\ot t^k) \sum_{j=1}^{n-1} d^{j}\sum_{i=1}^{l_{j}}(v_{j,i}\ot u_{j,i})\\
 \equiv & d^n  (h\ot t^k) (v_{\pl}\ot u_{n,1}) - [d^n, h\ot t^k] (v_{\pl}\ot u_{n,1})\\
&\hskip 5pt +d^{n-1}\sum_{i=1}^{l_{n-1}}(h\ot t^k)(v_{n-1,i}\ot u_{n-1,i}) \mod M^{(n-2)}\\
 \equiv &  d^n \pl(h\ot t^k)(v_{\pl}\ot u_{n,1})- nk d^{n-1} \pl(h\ot t^k)(v_{\pl}\ot u_{n,1})\\
& \hskip 5pt  +d^{n-1}\sum_{i=1}^{l_{n-1}}\pmu_{n-1,i}(h\ot
t^k)(v_{n-1,i}\ot u_{n-1,i}) \mod M^{(n-2)},
\end{split}\end{equation*}
where we have used the notation $\pl(h\ot t^k)=\sum \limits_{i=1}^m
a_i^k\l_i(h)$ for $\pl=(\l_1,\cdots,\l_m)\in(\h^*)^m$ and similarly
for $\pmu_{n-1,i}(h\ot t^k)$. Then $W$ contains the following vector
\begin{equation*} \begin{split}
w'=&(h\ot t^k)w  -\pl(h\ot t^k)w \equiv - nk d^{n-1} \pl(h\ot
t^k)(v_{\pl}\ot u_{n,1})\\
&+d^{n-1}\sum_{i=1}^{l_{n-1}}(\pmu_{n-1,i}-\pl)(h\ot
t^k)(v_{n-1,i}\ot u_{n-1,i}) \mod M^{(n-2)}.
\end{split}\end{equation*} for all $k>k'$ and $h\in\h$.
Since $\pl\neq 0$, there exist $h\in\h$ and $k\geq k'$ such that
$\pl(h\ot t^k)=\sum \limits_{j=1}^m a_i^k\l_i(h)\neq 0$.   It is
easy to check that $w'\neq0$ for this $h\ot t^k$ and hence $w'\in
W\cap M^{(n-1)}\setminus\{0\}$.
By induction hypothesis, we deduce that $(E(\pl,\pa)\ot
u_{n,1})[d]\subseteq W$. Consequently, the claim follows in this case.
\vskip5pt

\noindent {\bf Case 2}. $l_n\geq2$.\vskip5pt

Since  $v_{n,1},\cdots, v_{n,l_n}$ are linearly
independent and $E(\pl,\pa)$ is irreducible, using Density Theorem and
  Lemma \ref{iso}, we can choose  an element $x\in U(\g\ot
t^{k'}\C[t])$ such that $xv_{n,i}=\delta_{i, 1}v_{\pl}$. Then $W$ contains
\begin{equation*}
x w\equiv d^n\big(v_{\pl}\ot u_{n,1}\big) \mod
M^{(n-1)}.
\end{equation*}
By induction hypothesis, we deduce that   $(E(\pl,\pa)\ot u_{n,1})[d]\subseteq W$. Using induction hypothesis we obtain the claim.
\vskip5pt

From the above claim we see that $$N=\{u\in L\ |\ (E(\pl,\pa)\ot
u)[d]\subseteq W\}$$ is a nonzero subspace of $L$. It is easy to see that  $N$ is a $\hg$-submodule of $L$. Using the above claim we can easily prove that  $(E(\pl,\pa)\ot N)[d]= W$. This completes the proof of  the theorem. \end{proof}

Now we can have two important consequences from the above theorem.

\begin{cor} \label{M[d]}
Let $L$ be an irreducible module over $\hg$ such that for any $v\in
L$ we have $(\g\ot t^k\C[t])v=0$ for sufficiently large $k\in\N$.
Let $\pa\in\widetilde{\C^m}$, $\pl\in (\h^*)^m\setminus\{0\}$ and
$M=E(\pl,\pa)\ot L$. Then the module $M[d]$ is irreducible over
$\tg$.
\end{cor}

Taking $L$ to be the $1$-dimensional trivial $\hg$-module in Corollary
\ref{M[d]}, we obtain the following

\begin{cor} \label{E[d]} Let $\pa\in\widetilde{\C^m}$ and $\pl\in (\h^*)^m\setminus\{0\}$. Then   $E(\pl,\pa)[d]$ is an irreducible $\tg$-module.
\end{cor}

\section{Properties of locally finite modules}

Recall that $\g=\n_+\oplus \h\oplus \n_-$ is the standard triangle
decomposition of the finite dimensional simple Lie algebra $\g$ with respect to a fixed   Cartan subalgebra $\h$, where
$\n_\pm=\sum \limits_{\a\in\Delta_\pm}\g_\a$. 
We consider the current algebra $\g\otimes \C[t]$ and its subalgebra
$\tn_+=(\g\otimes t\C[t])\oplus \n_+$. We identify $\g\otimes 1$
with $\g$. For any $f(t)\in\C[t]$, we can define an ideal of
$\tn_+$: $$\II(f)=\n_+\otimes \gen{f(t)}+(\h+\n_-)\otimes \gen{t
f(t)},$$ where $\gen{f(t)}$ denotes the ideal generated by $f(t)$ in
$\C[t]$ and similar for $\gen{tf(t)}$.   It is easy to
see $\II(f)=\tn_+\cap (\g\ot f\C[t^{\pm1}])$.  For any $a\neq0$, we
have the following canonical Lie algebra isomorphism
\begin{equation}\label{canonical_iso} \tn_+/\II(t-a) \longrightarrow
\g,\quad x\ot g(t)\mapsto g(a)x,\end{equation} for all  $x\ot
g(t)\in\tn_+\ {\rm with}\ x\in \g, g(t)\in\C[t].$

\begin{thm}\label{local finite}
Let $V$ be a $\tg$-module such that each weight vector in $\tn_+$ acts
locally finitely on $V$. Then $V$ has a finite dimensional nonzero
$\tn_+$-submodule $W$ such that $I(f(t))W=0$ for some nonzero
$f(t)\in\C[t]$.
\end{thm}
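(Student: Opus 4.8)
The plan is to produce a finite-dimensional $\tn_+$-submodule by exploiting the locally finite action. Fix a nonzero vector $v\in V$. Since each weight vector of $\tn_+$ acts locally finitely, I would like to build a finite-dimensional space stable under $\tn_+$ starting from $v$. The structural fact I would lean on is the description
\[
\tn_+=(\g\ot t\C[t])\oplus\n_+
\]
together with the ideals $I(f)$ and the isomorphism \eqref{canonical_iso}. The first step is to understand what local finiteness forces on the action of the central-like elements and the degree grading.

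First I would consider a single weight vector $x\in\tn_+$ acting on $v$. Local finiteness gives a finite-dimensional $x$-invariant subspace $U_x=\spn\{v,xv,x^2v,\dots\}$. The issue is that different weight vectors need not stabilize a common finite-dimensional space, so I cannot simply intersect. Instead I would try to produce, from $v$, a nonzero vector $w$ annihilated by a large power of $\g\ot t^k\C[t]$; more precisely I expect that local finiteness of the weight vectors in the abelian-by-nilpotent piece $\g\ot t\C[t]$, combined with the grading by $d$ (or by total $t$-degree), forces the existence of a vector on which $\n_+\ot t\C[t]$ and the higher parts act nilpotently in a uniform way. The natural move is to pass to generalized eigenspaces: for a fixed abelian subalgebra inside $\tn_+$ (for instance a suitable commuting family like $\h\ot t^k$ together with the $e_{i,\a}\ot t^k$), decompose the finite-dimensional invariant space into generalized weight spaces and pick a genuine common eigenvector. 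This is where the main technical work lies.

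The key steps in order would be: (1) Show that for any $v$ there is an integer $k$ and a nonzero $w$ in the (finite-dimensional) span generated by applying elements of $\tn_+$ to $v$ such that $(\g\ot t^k\C[t])$ acts nilpotently on a finite-dimensional $\tn_+$-invariant space containing $w$; this uses that the ideal $\g\ot t^k\C[t]$ of $\tn_+$ is ``eventually'' in the radical of the finite-dimensional action. (2) Use that $\tn_+/(\g\ot t^k\C[t])$ is finite-dimensional to reduce to a finite-dimensional Lie algebra situation, where Lie's theorem and the theory of locally finite modules give a genuine finite-dimensional submodule $W$. (3) Identify the annihilator: since $W$ is finite-dimensional and $\tn_+$-stable, the two-sided structure forces $I(f(t))W=0$ for the polynomial $f(t)$ whose roots record the finitely many eigenvalues appearing, via the isomorphism $\tn_+/I(t-a)\cong\g$ of \eqref{canonical_iso}.

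The hard part will be step (1): establishing uniform nilpotency of the high-degree ideal $\g\ot t^k\C[t]$ on a finite-dimensional invariant space, since local finiteness is assumed only for individual weight vectors, not for the whole subalgebra simultaneously. The delicate point is converting pointwise local finiteness into the existence of a common finite-dimensional invariant subspace; I would attempt this by first producing an eigenvector for the grading derivation $d$ or for total degree, so that the degree filtration interacts cleanly with the action and large-degree elements are forced to act nilpotently by a weight-counting argument. Once a $d$-weight is pinned down, only finitely many degrees can contribute nontrivially, which should bound everything and yield the finite-dimensional $W$ with $I(f(t))W=0$.
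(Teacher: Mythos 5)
There is a genuine gap, and it sits exactly where you yourself locate the hard part: step (1). The mechanism you propose for it --- ``first producing an eigenvector for the grading derivation $d$ or for total degree'' --- is not available under the hypotheses. The assumption concerns only weight vectors of $\tn_+$, and $d\notin\tn_+$; nothing forces $d$ to act locally finitely on $V$. In fact, in the very modules this theorem is applied to in Section 5 (the irreducible modules of the form $M[d]=\C[d]\ot M$, on which $d$ acts freely), $V$ has no $d$-eigenvectors at all, and ``total degree'' is a grading of the Lie algebra, not an operator on $V$. Your alternative formulation of step (1), that the ideal $\g\ot t^k\C[t]$ is ``eventually in the radical of the finite-dimensional action,'' is circular: it presupposes a finite-dimensional $\tn_+$-invariant space, which is precisely what the theorem has to produce. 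You do gesture at the right starting move (a common eigenvector for a commuting family of weight vectors), but the family you name, $\h\ot t^k$ together with the $e_{i,\a}\ot t^k$, is not commuting, and you never extract anything from the eigenvector beyond a hope that ``large-degree elements are forced to act nilpotently.''

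The paper closes this gap with a device entirely inside $\tn_+$ and independent of $d$. Take a common eigenvector $v$ for the finite-dimensional \emph{abelian} subalgebra $\tg_\d=\h\ot t\subseteq\tn_+$ (its elements are commuting weight vectors, each locally finite, so together they span a locally finite action and a common eigenvector exists). The key identity is then
\begin{equation*}
\a(h)\,(x_\a\ot t^{k+1})v=(h\ot t)(x_\a\ot t^k)v-(x_\a\ot t^k)(h\ot t)v ,
\end{equation*}
which by induction on $k$ gives $\sum_{k\ge 0}(x_\a\ot t^k)v\subseteq U(\tg_\d)\,x_\a v$, a finite-dimensional space. Hence each root vector satisfies $(x_\a\ot g_\a(t))v=0$ for some nonzero polynomial $g_\a$; applying $h\ot t$ again shows the set of such polynomials is an ideal of $\C[t]$, and taking the $\lcm$ of its generators over the finitely many $\a\in\Delta$, together with $[\g_\a,\g_{-\a}]\subseteq\h$, yields a single $f$ with $I(f)v=0$. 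Only at this point does your step (2) become meaningful, and even there ``Lie's theorem and the theory of locally finite modules'' is not enough by itself: the paper must still run a filtration/straightening induction on ordered monomials in the finitely many weight vectors spanning $\tn_+$ modulo $I(f)$ to show that $U(\tn_+)v$ is finite dimensional, since pointwise local finiteness of a spanning set of a finite-dimensional Lie algebra does not hand you local finiteness of the whole action without such an argument. So the proposal is missing the one idea that makes the theorem work (the $\h\ot t$ eigenvector plus the degree-raising bracket trick), and the substitute it offers would fail on the motivating examples.
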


\begin{proof} Let $\d$ be the standard imaginary root of $\tg$ with respect to $\th$. Then $\tg_\d
= \h \otimes t$ is a finite dimensional abelian subalgebra of
$\tg$. Since $\tg_{\delta}$ is locally finite on $V$, there is a
nonzero vector $v\in V$ such that $(h\ot t)v\in\C v$ for all $h\ot t
\in \tg_\d$.

Take any $\a\in\Delta$ which is considered as a subset of $\tD$. We
have
$$\a(h)x\ot t^{k+1}v=[h\ot t, x\ot t^k]v=(h\ot t)(x\ot t^k)v-(x\ot t^k)(h\ot t)v,$$ for all $ h\in\h, x\in \tg_{\a},$
which implies $\tg_{\a+(k+1)\delta}\subseteq
\tg_{\delta}\tg_{\a+k\delta}v+\tg_{\a+k\delta}v$ for all $k\in\Z_+$.
Inductively we have $\tg_{k\d+\a} v\subseteq
\sum\limits_{i=0}^{k}\tg_\d^k \tg_\a v$ for all $k \in \Z_+$. Since
$\dim \sum\limits_{k=0}^{\infty}\tg_\d^k \tg_\a v<\infty$, there is
$k_0 \in \N$ such that $\sum\limits_{k=0}^{\infty}\tg_\d^k \tg_\a
v=\sum\limits_{k=0}^{k_0} \tg_\d^k \tg_\a v$ is a finite dimensional
space, i.e., $$\dim \Big(\sum\limits_{k \in \Z_+} \tg_{k\d +
\a}v\Big) =\dim \Big(\sum\limits_{k =0}^{k_0} \tg_{k\d + \a}v\Big)<
\infty.$$ Let $0 \neq x_\a \in \tg_\a$. Then there is a nonzero
$g(t) \in \C[t]$ such that $(x_\a \otimes g(t))v=0$. Note that we
may assume that $g(t)$ is divisible by $t$ when $\a\in\Delta_-$. Set
$$\II_\a = \{g(t) \in \C[t]\ |\ (x_\a \otimes g(t))v=0 {\rm\
and\ }t {\rm\ divides}\ g(t)\ {\rm if}\ \a\in\Delta_-\},$$ which is
nonzero. If $g(t) \in \II_\a$, for any $h \in \h$ we have
\begin{equation*}\begin{split}
0 = (h \otimes t)(x_\a \otimes g(t))v = & \big([h,x_\a] \otimes tg(t)\big)v + (x_\a \otimes g(t))(h \otimes t)v \\
= &\a(h)\big( x_\a \otimes tg(t)\big)v.
\end{split}\end{equation*}
Hence $tg(t) \in \II_\a$, i.e., $\II_\a$ is an ideal of $\C[t]$.
Assume $\II_\a = \gen{f_\a(t)}$ for some nonzero polynomial $f_\a(t)
\in \C[t]$. Setting $\tilde f(t) = \lcm \{f_\a(t) : {\a \in
\Delta}\} \in \C[t]$, the least common multiple of $f_\a(t),
\a\in\Delta$, we have $(\g_\a \otimes \gen{\tilde f(x)})v =0$ for
all $\a \in \Delta$. It follows that
$$(\h \otimes \gen{\tilde f^2(t)})v = \sum\limits_{\a \in \Delta}
\Big[\tg_\a  \otimes \gen{\tilde f(t)}, \tg_{-\a} \otimes \gen{\tilde f(t)}\Big]v=0.$$ Then
there exists nonzero $f\in\C[t]$ such that
 $\II(f)v=0$. Note that $t$ divides $f(t)$,
so we may assume that $\deg(f^2(t))=p+1\geq 1$. Our result follows
from\vskip5pt

\noindent {\bf Claim.} $U(\tn_+)v$ is a finite-dimensional
$\tn_+$-module.\vskip5pt

We know $\II(f)U(\tn_+)v=0$ since $\II(f)$ is an ideal of $\tn_+$.
Take nonzero $e_\a\in\g_\a, f_\a\in\g_{-\a}$ for all $\a\in\Delta_+$
and let $\{h_1,\cdots,h_l\}$ be a basis of $\h$. Denote $\deg(x\ot
t^i)=i$ for all $x\in\g$ and $i\in\Z_+$. Let
$$\{x_i\ |\ i\in\N\}=\{e_\a, e_\a\ot t^{i}, f_\a\ot
t^i, h_j\ot t^i\ |\ \a\in\Delta_+, j=1,\cdots, l, i\in\N\}$$ be such
that $\deg(x_i)\leq \deg(x_{i+1})$ for all $i\in\N$. Take $s$ such
that $\deg (x_{s})=p$ and $\deg(x_{s+1})=p+1$. We know  that
$$W=\sum_{r_i\in\Z_+}\C x_1^{r_1}\cdots x_{s}^{r_s} v$$ is finite-dimensional since
the actions of all $x_i$ are locally finite. For $q\in\Z_+$ let
$$W_q={\text{span}}\{x_1^{r_1}\cdots x_{s}^{r_s} v |
r_1+r_2+...+r_s\le q\}.$$ We know that $W_q=W$ for sufficiently
large $q$. We will show that $\tn_+W_q\subset W_{q+1}.$ Let
\begin{equation}\label{W}
 w=x_1^{r_1}\cdots x_{s}^{r_{s}} v\in W_q, \ \
 r_{i}\in\Z_+.
\end{equation} It is enough to show that $x_jw\in W_{q+1}$ for any $j\in\N$. We will do this by induction on $q$ and then on $j$. For $q=0$,
it is clear that $x_jw\in\C x_jv\subseteq {W}_{q+1}$, since any
$x_jv$ is a linear combination of $x_1v, \cdots, x_sv$ (because of the definition of $f(t)$ and $p$).

Suppose $x_jw\in W_{q+1}$ for any $w$ given in \eqref{W} and any
$j\in\N$ if $q<k$. Now consider one such $w\in W_q$ given in
\eqref{W} with $q=k$.

We write $w=x_{i_1}^{r_1}\cdots x_{i_l}^{r_{l}} v$ where
$i_1<i_2<...<i_l\le s$, $r_1,r_2,...,r_l\in\N$ and
$r_1+r_2+...+r_l=q$. Now we use induction on $j$. If $j\le i_1$, it
is clear that $x_jw\in W_{q+1}$. Now suppose $j>i_1$. We have
\begin{equation}\label{x_j w}x_jw=x_jx_{i_1}x_{i_1}^{r_1-1}\cdots x_{i_l}^{r_{l}} v\\
=[x_j,x_{i_1}]x_{i_1}^{r_1-1}\cdots x_{i_l}^{r_{l}} v
+x_{i_1}x_jx_{i_1}^{r_1-1}\cdots x_{i_l}^{r_{l}} v.\end{equation}
Since $[x_j,x_{i_1}]W_{q-1}\subseteq\text{span}\{x_i\ |\
i\in\N\}W_{q-1}$, by induction hypotheses we know that the first
term in \eqref{x_j w} is $[x_j,x_{i_1}]x_{i_1}^{r_1-1}\cdots
x_{i_l}^{r_{l}} v\in W_{q}$. For the second term, we see that
$x_jx_{i_1}^{r_1-1}\cdots x_{i_l}^{r_{l}} v\in W_{q}$ by the
induction hypothesis on $q$. Then by the induction hypothesis on $j$
we know that the second term in \eqref{x_j w} is
$x_{i_1}x_jx_{i_1}^{r_1-1}\cdots x_{i_l}^{r_{l}} v\in
x_{i_1}W_q\subset W_{q+1}$ since $i_1<j$. Thus $x_jW_q\subset
W_{q+1}$ for all $j$ and all $q$. So $W$ is an $\tn_+$-module, as
desired.
\end{proof}

It is clear that
\begin{equation*}\begin{array}{c}
 [\tn_+,\tn_+]=\sum\limits_{\a\in\Delta_+\setminus\Pi}\g_\a\otimes\C[t]+\sum\limits_{\a\in\Pi}\g_\a\otimes \gen{t}+\sum\limits_{\a\in\Delta_+\setminus\{\theta\}}\g_{-\a}\otimes\gen{t}\\\\
 +\g_{-\theta}\otimes\gen{t^2}+\h\otimes\gen{t},
\end{array}\end{equation*}
where $\Pi$ is the prime root system of $\g$, and
$$\II(t)=\n_+\ot \gen{t}+(\n_-+\h)\ot \gen{t^2}\subseteq[\tn_+,\tn_+].$$

\begin{lem}\label{I(t-a)} Let $a\in\C$, $n\in\N$, and let $S$ be a nontrivial finite dimensional
irreducible $\tn_+$-module with $\II((t-a)^n)S=0$. \begin{itemize}\item[(i).]If $a\neq 0$ then
$\II(t-a)=\Ann(S)$. \item[(ii).]If $a=0$, then $S$ is $1$-dimensional and
$[\tn_+,\tn_+]\subseteq\Ann(S)$. \end{itemize} 
\end{lem}
\begin{proof} 
Denote $\LL=\tn_+/\II((t-a)^n)$ for short. We may consider $S$ as an
$\LL$-module. Let $\Rad(\LL)$ be the radical of $\LL$. Then
$\Rad(\LL)$ is nilpotent and by Proposition 19.1 in \cite{Hu}, we
see that $\Rad(\LL)/\Ann(S)$ is contained in the center of
$\LL/\Ann(S)$, which implies that any element in $\Rad(\LL)$ acts on
$S$ as a scalar. Then we have $\Rad(\LL)\cap[\LL,\LL]\subseteq
\Ann(S)$ since the trace of any element in $[\LL,\LL]$ on $S$ is
$0$. We know that $\II(t-a)/\II((t-a)^n)\subseteq \Rad(\LL)$. By
computing $[\LL,\LL]\cap \II(t-a)$, we deduce that $\Ann(S)$
contains the following subspace
\begin{equation*}\begin{array}{c}
 \sum\limits_{\a\in\Delta_+\setminus\Pi}\g_\a\otimes \gen{t-a}+\sum\limits_{\a\in\Pi}\g_\a\otimes \gen{t(t-a)}+\sum\limits_{\a\in\Delta_+\setminus\{\theta\}}\g_{-\a}\otimes\gen{t(t-a)}\\\\
 +\g_{-\theta}\otimes\gen{t^2(t-a)}+\h\otimes\gen{t(t-a)}.
\end{array}\end{equation*}

(i). If $a\ne0$, combining
with the fact $\II((t-a)^n)\subset \Ann(S)$ we see that
$\II(t-a)\subseteq \Ann(S)$. Since $\tn_+/\II(t-a)\cong \g$ is
simple, we must have $\II(t-a)=\Ann(S)$.

(ii). If $a=0$, we know that $\LL$ is nilpotent and finite-dimensional. So $S$ has to be
$1$-dimensional, and $[\LL,\LL]$ acts trivially on $S$, i.e.,
$[\tn_+,\tn_+]\subseteq \Ann(S)$.
\end{proof}

Take any $\pmb{a}=(a_1,\cdots,a_m)\in\widetilde{\C^m}$ and let $f(t)=\prod\limits_{i=1}^m(t-a_i)$. 
There is a canonical Lie algebra isomorphism
\begin{equation}\label{alg_iso}
\pi_f:\quad\big(\tn_+/\II(tf)\big)\ \rightarrow\
\tn_+/\II(t)\oplus\bigoplus_{i=1}^m\tn_+/\II(t-a_i),
\end{equation}
defined by mapping $x\otimes g(t)$ to $(x\otimes
g(t),\cdots,x\otimes g(t))$. Here and later, we continue to denote
by $x\otimes g(t)$ its image in $\tn_+/\II$ for any $x\otimes
g(t)\in \tn_+$ and an ideal $\II\subseteq\tn_+$ for convenience.
Using \eqref{canonical_iso} we further have the Lie algebra
isomorphism
\begin{equation}\label{alg_iso2}\begin{array}{c}
\tn_+/\II(tf)\ \rightarrow\
(\tn_+/\II(t))\oplus\bigoplus_{i=1}^m\g,\\\\ x\ot g(t)\mapsto (x\ot
g(t),g(a_1)x,...,g(a_m)x), \forall
\,\,x\in\g,\ g(t)\in\C[t].\end{array}\end{equation}

Given any Lie algebra homomorphism $\eta: \tn_+\rightarrow \C$. We
can define a $1$-dimensional $\tn_+$-module $\C v_{\eta}$ by
$xv_{\eta}=\eta(x)v_\eta$ for all $x\in\tn_+$. Note that
$\eta([\tn_+,\tn_+])=0$ and hence $\eta$ is completely determined by
its values on $\sum\limits_{\a\in\Pi}\g_\a+\g_{-\theta}\otimes t$. For any
$\g$-modules $L_1,...,L_m$, using the isomorphism in
\eqref{alg_iso2} and noticing that $\II(t)\subseteq[\tn_+,\tn_+]$,
we can make $ L_1\ot\cdots\ot L_m\ot \C v_\eta$ into an
$\tn_+/\II(f)$-module via:
\begin{equation*}\begin{array}{c}
 (x\ot g(t))v_1\ot\cdots \ot v_m\ot v_\eta=\eta(x\ot g(t))v_1\cdots\ot v_m\ot v_\eta\\\\
 +\sum_{i=1}^mg(a_i)v_1\ot\cdots \ot xv_i\ot\cdots \ot v_m\ot v_\eta,
\end{array}\end{equation*}
for all $v_i\in L_i$ and $x\ot g(t)\in\tn_+$.
We denote the resulted module by $S(\eta; L_1,\cdots,L_m;
a_1,\cdots,a_m)$.
Note that $S(\eta; L_1,\cdots,L_m; a_1,\cdots,a_m)$ can be naturally
viewed as an $\tn_+$-module. Here we point out that when $\eta=0$,
then $\C v_\eta$ is a trivial $\tn_+$-module and $\C v_\eta$ makes
no contribution to the $\tn_+$-module $S(\eta; L_1,\cdots,L_m;
a_1,\cdots,a_m)$.

Let $L(\l_i)$ be the irreducible $\g$-module with highest weight
$\l_i\in \h^*$ for $ i=1,\cdots,m$. Then we denote $S(\eta;
L(\l_1),\cdots,L(\l_m); a_1,\cdots,a_m)$ by $S(\eta,
{\pmb{\lambda}}, \pmb{a})$ for short, where
$\pmb{\lambda}=(\l_1,\cdots,\l_m)$
and $\pmb{a}=(a_1,\cdots,a_m)$. 
It is not hard to prove that $S(\eta, {\pmb{\lambda}}, \pmb{a})$ is
an irreducible $\tilde{\n}_+$-module as well as an irreducible
$\tilde{\n}_+/\II(tf)$-module. If all $\l_1,\cdots,\l_m$ are
dominant, then $S(\eta,\pmb{\l},\pmb{a})$ is finite dimensional. We
also note that when some $\l_i=0$, this $L(\l_i)$ is a
$1$-dimensional trivial $\g$-module and hence makes no contribution
to the $\tn_+$-module $S(\eta,\pl,\pa)$.

\begin{lem}\label{I(f)} Let  ${f}(t)=\prod \limits_{i=1}^m(t-a_i)$ and $\tilde
f(t)=\prod \limits_{i=0}^m(t-a_i)^{n_i}$, where
$\pa=(a_1,\cdots,a_m)\in\widetilde{\C^m}$, $a_0=0$, $n_0\in\Z_+$ and other $n_i\in\N$.
Let $S$ be an irreducible finite dimensional $\tn_+$-module with
$\II(\tilde f)S=0$. Then $S\cong S(\eta, \pmb{\lambda}, \pmb{a})$
for some Lie algebra homomorphism $\eta:\tn_+\rightarrow \C$ and
$\pmb{\lambda}=(\l_1,\cdots,\l_m)$ with each $\l_i\in\h^*$ being
dominant. Moreover, \begin{itemize} \item[(i).]if $n_0=0$ then $\eta=0$ and
$\II({f})\subseteq\Ann(S)$;  \item[(ii).] if $n_0\neq0$ we have
$$\sum_{\a\in\Delta_+\setminus\Pi}\g_\a\otimes \gen{{f}}+\sum_{\a\in\Pi}\g_\a\otimes \gen{t{f}}
+\sum_{\a\in\Delta_+\setminus\{\theta\}}\g_{-\a}\otimes\gen{t{f}}$$
$$ +\g_{-\theta}\otimes\gen{t^2{f}}
+\h\otimes\gen{t{f}}\subseteq\Ann(S).$$\end{itemize}
\end{lem}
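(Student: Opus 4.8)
The plan is to decompose the finite-dimensional Lie algebra $\LL:=\tn_+/I(\tilde f)$ into a direct sum of ideals indexed by the points $a_0=0,a_1,\dots,a_m$ via the Chinese Remainder Theorem, to treat $S$ as an $\LL$-module, and thereby to reduce everything to the single-point statement Lemma \ref{I(t-a)}. First I would establish the Lie algebra isomorphism
\[
\LL=\tn_+/I(\tilde f)\;\cong\;\bigoplus_{i=0}^{m}\LL_i,\qquad \LL_i:=\tn_+/I\big((t-a_i)^{n_i}\big),
\]
which amounts to checking $\bigcap_{i=0}^m I\big((t-a_i)^{n_i}\big)=I(\tilde f)$ inside $\tn_+$. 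Splitting this into its $\n_+$-component and its $(\h+\n_-)$-component, it reduces to two $\lcm$ computations in $\C[t]$, namely $\bigcap_i\gen{(t-a_i)^{n_i}}=\gen{\tilde f}$ and $\bigcap_i\gen{t(t-a_i)^{n_i}}=\gen{t\tilde f}$, where the single surviving factor of $t$ in the second $\lcm$ comes from the special role of $a_0=0$. I expect this bookkeeping — tracking the asymmetry between the $\n_+$-part (no $t$) and the $(\h+\n_-)$-part (one $t$), together with the extra factor of $t$ contributed by the point $0$ — to be the main technical nuisance.

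Next I would analyze each factor by Lemma \ref{I(t-a)}. Since $S$ is finite dimensional and irreducible over $\bigoplus_i\LL_i$, and $U(\bigoplus_i\LL_i)\cong\bigotimes_i U(\LL_i)$, the standard fact that an irreducible finite-dimensional module over a tensor product of algebras is an (external) tensor product of irreducibles gives $S\cong\bigotimes_{i=0}^m S_i$, with each $S_i$ an irreducible $\LL_i$-module, i.e.\ an irreducible $\tn_+$-module killed by $I((t-a_i)^{n_i})$. For $i\ge 1$, Lemma \ref{I(t-a)} yields $\ann(S_i)=I(t-a_i)$, so via \eqref{canonical_iso} $S_i$ is a finite-dimensional irreducible $\g$-module $L(\l_i)$ with $\l_i$ dominant. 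For $i=0$, $\LL_0=\tn_+/I(t^{n_0})$ is nilpotent, so $S_0=\C v_\eta$ is one dimensional with a character $\eta_0$; moreover Lemma \ref{I(t-a)} gives $K\subseteq\ann(S_0)$, and since one checks $I(t)\subseteq K$, the character lifts to a homomorphism $\eta:\tn_+\to\C$ with $\eta(I(t))=0$.

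It then remains to reassemble the pieces and identify $S$ with $S(\eta,\pl,\pa)$. Here I would unwind the isomorphism: under the composite sending $x\ot g(t)$ through $\LL\cong\bigoplus_i\LL_i$, the character on the $i=0$ summand, and the projections $\LL_i\to\g$ for $i\ge1$, the element $x\ot g(t)$ acts through the data $(\eta(x\ot g);\,g(a_1)x,\dots,g(a_m)x)$, which is exactly the map \eqref{alg_iso2}. Feeding this through the tensor decomposition $S\cong\C v_\eta\ot L(\l_1)\ot\cdots\ot L(\l_m)$ reproduces the action
\[
(x\ot g)\big(v_\eta\ot v_1\ot\cdots\ot v_m\big)=\eta(x\ot g)\,v_\eta\ot v_1\ot\cdots\ot v_m+\sum_{i=1}^m g(a_i)\,v_\eta\ot v_1\ot\cdots\ot xv_i\ot\cdots\ot v_m,
\]
so that $S\cong S(\eta,\pl,\pa)$ with each $\l_i$ dominant.

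Finally I would read off the two cases. If $n_0=0$ there is no factor $\LL_0$, the decomposition is $\LL\cong\bigoplus_{i\ge1}\g$, hence $\eta=0$ and $S$ is a module over $\tn_+/I(f)\cong\bigoplus_{i=1}^m\g$, giving $I(f)\subseteq\ann(S)$. If $n_0\ne0$, then $[\LL_0,\LL_0]$ annihilates the character $S_0$, and since by Lemma \ref{I(t-a)} this contains $K$, pulling $K$ back through the idempotent at $0$ — concretely, multiplying each generator occurring in $K$ by $f$, so that it vanishes at every $a_i$ while remaining in $[\LL_0,\LL_0]$ at the point $0$ — shows that the displayed subspace $\sum_{\a\in\Delta_+\setminus\Pi}\g_\a\ot\gen{f}+\cdots+\h\ot\gen{tf}$ lies in $\ann(S)$. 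The delicate point throughout is precisely this translation between the local description at $0$ provided by Lemma \ref{I(t-a)} and the global ideals of $\tn_+$: verifying term by term that ``$K$ multiplied by $f$'' is exactly the stated subspace.
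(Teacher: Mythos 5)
Your proposal is correct and follows essentially the same route as the paper's proof: the CRT decomposition $\tn_+/I(\tilde f)\cong\bigoplus_{i=0}^m\tn_+/I\big((t-a_i)^{n_i}\big)$, pointwise application of Lemma \ref{I(t-a)} (giving $L(\l_i)$ with $\l_i$ dominant at each $a_i\neq 0$ and a one-dimensional character at $0$), reassembly of the tensor factors into $S(\eta,\pl,\pa)$ via \eqref{alg_iso2}, and the ``multiply $K$ by $f$'' verification of the annihilator statements (which the paper dismisses as obvious). The only cosmetic difference is that you invoke the external-tensor-product decomposition of a finite-dimensional irreducible module over a direct sum of Lie algebras up front, whereas the paper first picks irreducible $\LL_i$-submodules $L_i$ of $S$, computes their annihilators, and only then identifies $S\simeq L_0\ot L_1\ot\cdots\ot L_m$; the two orderings are equivalent.
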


\begin{proof} Denote $\LL=\tn_+/\II(\tilde f)$. We regard $S$ as an $\LL$-module. In a similar Lie algebra isomorphism as in
(\ref{alg_iso}): $$\tilde{\pi}_f:\quad\tn_+/\II(\tilde f)\
\rightarrow\ \bigoplus_{i=0}^m\tn_+/\II((t-a_i)^{n_i}),$$ let
$\LL_i=\tn_+/\II((t-a_i)^{n_i})$. Hence $S$ can be viewed as an
$\LL_i$-module for each $i$. We choose a nonzero irreducible
$\LL_i$-submodule of $S$, say $L_i$.

First suppose $i\neq0$. 
By Lemma \ref{I(t-a)}, we see that   $\II(t-a_i)
L_i=0$. So $L_i$ is an irreducible module over the simple Lie algebra $\g\cong\tn_+/\II(t-a_i)$.
We see that $L_i\cong L(\l_i)$ for some weight of $\g$ with respect
to $\h$, i.e.,$(x\ot t^k)v=a_i^kxv$ for any $v\in L_i$, $x\in\g$ and
$k\in \N$. Moreover, $\l_i$ is dominant, and $\l_i=0$ if further $L_i$ is
trivial.

Now suppose that $i=0$ and $n_0\ne0$. Then $\LL_0$ is nilpotent,
furthermore $L_0$ is $1$-dimensional and there exist a Lie algebra
homomorphism $\eta_0: \LL_0\rightarrow \C$ and $w\in L_0$ such that
$xw=\eta_0(x)w$ for all $x\in\LL_0$. Again by Lemma \ref{I(t-a)}, we
get $[\tn_+,\tn_+]L_0=0$. Recalling that
$\II(t)\subseteq[\tn_+,\tn_+]$, so we obtain $\II(t)L_0=0$.

If $n_0=0$, then  $\LL_0=0$ and hence $\eta=0$.

From \cite[Lemma 2.7]{Li} we know that $S\cong S(\eta, \pmb{\lambda}, \pmb{a})$. Part (i) follows easily. For Part (ii), computing the left hand side of  $[\tn_+,\tn_+]\cap \II(f)\subset \Ann(S)$ we   deduce the statement in (ii).
The lemma is true.
\end{proof}

\section{Classification  of locally finite irreducible modules} 

In this section we will obtain the complete classification of
irreducible  $\tg$-modules on which each root vector in $\tn_+$
acts on the modules locally finitely.

Let $\pl=(\l_1,\cdots,\l_m)\in (\h^*)^m$, $\pa=(a_1,a_2,...,a_m)\in
\widetilde{\C^m}$, and let $\eta: \tn_+\rightarrow \C$ be a Lie
algebra homomorphism. We will denote by $W(\eta)$   an irreducible
Whittaker $\hg$-module with Whittaker function $\eta$ if $\eta\ne0$.
For any $\gamma\in\hat{\h}^*$, we  denote by $\hat V(\gamma)$ the
irreducible highest weight $\hg$-module  with highest weight
$\gamma$.

Let $S=S(\eta,\pl,\pa)$ be as defined in Sect.4, and let $f_{\pa}= \prod \limits_{i=1}^m(t-a_i)$. Then
$\II(tf_{\pa})S=0$, and $\II(f_{\pa})S=0$ if additionally $\eta=0$. 
Recall from \eqref{alg_iso} that
$$\aligned \tn_+/\II(tf_{\pa})\cong& (\tn_+/\II(t))\oplus(\tn_+/\II(f_a))\\
\cong&(\tn/\II(t))\oplus\bigoplus_{i=1}^m\tn_+/\II(t-a_i).\endaligned$$ 
Let $\pi_i$ be the projection from $\tn_+/\II(f_{\pa})$ to
$\g^{(i)}=\tn_+/\II(t-a_i)$ which is isomorphic to $\g$ for all
$i=1,\cdots,m$. We identify $\tn_+/\II(f_{\pa})$, which can be
viewed as an ideal of $\tn_+/\II(tf_{\pa})$, with $\g^m$. Then
regarded as an $\tn_+/\II(f_{\pa})$-module, $S$ is an irreducible
highest weight
module with highest weight $\pl\in(\h^*)^m$. 

We consider the induced $\tg$-module
$$\tM(\eta,\pl,\pa)=\Ind_{\tn_+}^{\tg}S(\eta,\pl,\pa)=U(\tg)\ot_{U(\tn_+)}S(\eta,\pl,\pa),$$
and  the induced $\hg$-module
$$\hM(\eta,\pl,\pa)=\Ind_{\tn_+}^{\hg}S(\eta,\pl,\pa)=U(\hg)\ot_{U(\tn_+)}S(\eta,\pl,\pa).$$
Clearly, $\tM(\eta,\pl,\pa)=\hM(\eta,\pl,\pa)[d]=\C[d]\ot
\hM(\eta,\pl,\pa)$. Let $\C u_\eta$ be the $1$-dimensional
$\tn_+$-module such that $x u_\eta=\eta(x)u_\eta$ for all $x\in
\tn_+$. Then we have the induced $\hg$-module
$$\hW(\eta)=\Ind_{\tn_+}^{\hg}\C u_\eta.$$
If $\eta\neq0$,  $\hW(\eta)$ is the universal Whittaker module with
Whittaker function $\eta$.
Recall that the irreducible $\hg$-module $E(\pl,\pa)$ 
is defined in Section 3 (before Theorem \ref{tensor}).

\begin{lem}\label{phi-iso} As $\hg$-modules we have
$\hM(\eta,\pl,\pa)\cong E(\pl,\pa)\ot \hW(\eta)$.
\end{lem}

\begin{proof} Regarding $E( \pl,\pa)$ as an $\tn_+$-module,
we have the canonical $\tn_+$-module isomorphism $\phi:
S(0,\pl,\pa)\rightarrow E(\pl,\pa)$.   We get   the canonical
$\tn_+$-module homomorphism
$$S(\eta,\pl,\pa)= S(0,\pl,\pa)\ot v_\eta\longrightarrow
 E(\pl,\pa)\ot \hW(\eta),$$ $$  v\ot v_\eta\mapsto \phi(v)\ot
u_\eta,\ \forall\ v\in S(0,\pl,\pa),$$
whose restriction is  the canonical
$\tn_+$-module isomorphism
$$S(\eta,\pl,\pa) \longrightarrow
 E(\pl,\pa)\ot u_\eta.$$
From the universal property we obtain an induced $\hg$-module homomorphism
$$\widehat{\phi}:
\hM(\eta,\pl,\pa)=\Ind_{\tn_+}^{\hg}S(\eta,\pl,\pa)\hskip 3cm$$
$$\hskip 3cm \to
 E(\pl,\pa)\ot \hW(\eta)= E(\pl,\pa)\ot (\Ind_{\tn_+}^{\hg}\C u_\eta).$$

Note that $\hM(\eta,\pl,\pa)= U(\hb_-)\ot S(\eta,\pl,\pa)$ and $$
E(\pl,\pa)\ot \hW(\eta)=   E(\pl,\pa) \ot U(\hb_-)\ot\C u_\eta$$ as
vector spaces, where $\hb_-=\hh+\tn_-$. Take a basis  $\{
b_1,b_2,\cdots\}$ of $\hb_-$.  We have a PBW basis
$$\{b_n^{r_n}\cdots b_1^{r_1}\ |\ n\in\Z_+, r_1,\dots, r_n\in\Z_+\}$$
for $U(\hb_-)$. For any $l\in\Z_+$, we denote
$$U(\hb_-)_l=\sum_{\begin{matrix} n, r_i\in\Z_+,\\  r_1+\cdots+r_n=l\end{matrix} }\C b_n^{r_n}\cdots b_1^{r_1}.$$
By induction on $l\in\Z_+$, we can easily deduce the following\vskip5pt

\noindent{\bf Claim 1.} For any $x\in U(\hb_-)_l, l\in\Z_+$ and
$v\in S(0,\pl,\pa)$, $$\hphi\big(x(v\ot v_\eta)\big)=x(\phi(v)\ot
u_\eta)\equiv\phi(v)\ot (x u_\eta) \hskip -7pt \mod E(\pl,\pa)\ot
\sum_{i=0}^{l-1} U(\hb_-)_iu_\eta.$$

Noticing that $\phi$ is an isomorphism, we see $ E(\pl,\pa)\ot
\sum \limits_{i=0}^{l-1}U(\hb_-)_iu_\eta\subseteq \hphi(\hM(\eta,\pl,\pa))$
implies $ E(\pl,\pa)\ot \sum \limits_{i=0}^{l}U(\hb_-)_iu_\eta \subseteq
\hphi(\hM(\eta,\pl,\pa))$ from the claim. Then by induction we can
obtain that $ E(\pl,\pa)\ot \hW(\eta)\subseteq
\hphi(\hM(\eta,\pl,\pa))$, that is, $\hphi$ is surjective.

Now we prove that $\hphi$ is injective. For any nonzero $w\in
\hM(\eta,\pl,\pa)$, we write
$$w=\sum_{i=0}^kx_i(v_i\ot v_\eta)\mod  \sum_{i=0}^{l-1}U(\hb_-)_i( S(0,\pl,\pa)\ot u_\eta),$$
where $x_i\in U(\hb_-)_l$ and $v_i\in S(0,\pl,\pa)$. We may assume
that $v_i$'s are linearly independent and each $x_i\ne0$. Using
Claim 1, we see that $$\hphi(w)\equiv\sum_{i=0}^k \phi(v_i)\ot (x_i
u_\eta)  \mod E(\pl,\pa)\ot \sum_{i=0}^{l-1}U(\hb_-)_iu_\eta.$$
which is nonzero since  $\phi(v_i)$'s are linearly independent and
each $x_i\ne0$. Thus $\hphi$ is injective. We conclude that $\hphi$
is an isomorphism.
\end{proof}

As a result of Lemma \ref{phi-iso}, we have a $\tg$-module
isomorphism
$$\tphi: \tM(\eta,\pl,\pa)=\Ind_{\hg}^{\tg}\hM(\eta,\pl,\pa)\longrightarrow
\Ind_{\hg}^{\tg}\big(E(\pl,\pa)\ot \hW(\eta)\big).$$ Note that
$\Ind_{\hg}^{\tg}\left(E(\pl,\pa)\ot \hW(\eta)\right)=
(E(\pl,\pa)\ot \hW(\eta))[d]$. By Theorem \ref{E[d]}, any
$\tg$-submodule of $(E(\pl,\pa) \ot \hW(\eta))[d]$ is of the form $(
E(\pl,\pa)\ot N)[d]$ for some $\hg$-submodule $N$ of $\hW(\eta)$.
Thus any irreducible $\tg$-quotient module of $(\hW(\eta)\ot
E(\pl,\pa))[d]$ is of the form $(W(\eta)\ot E(\pl,\pa))[d]$ for some
irreducible $\hg$-quotient module $W(\eta)$ of $\hW(\eta)$. Now we
can characterize all irreducible $\hg$-quotient modules   $W(\eta)$ of
$\hW(\eta)$.

\begin{lem}\label{quotient-hW} Let $\eta: \tn_+\rightarrow \C$ be a Lie algebra homomorphism,
and $\hW(\eta)$ be the universal Whittaker module over $\hg$.
\enumerate\item  If $\eta\neq0$, then any irreducible quotient
module of $\hW(\eta)$  is an
 irreducible  Whittaker $\hg$-module of type $\eta$.
\item If $\eta=0$, then any irreducible quotient module of $\hW(\eta)$
is an irreducible highest weight module $\hat V(\gamma)$ over $\hg$
with some highest weight $\gamma\in \hat \h^*$.\endenumerate
\end{lem}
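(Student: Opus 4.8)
The plan is to analyze the universal Whittaker module $M(\eta)=\ind_{\tn_+}^{\hg}\C u_\eta$ by studying its structure relative to the triangular decomposition $\hg=\tn_-\oplus\hh\oplus\tn_+$. As a vector space, $M(\eta)=U(\tn_-)\ot\C u_\eta$ by the PBW theorem. Let $Q$ be an irreducible quotient of $M(\eta)$, with $\bar u_\eta$ the image of the cyclic generator $u_\eta$; then $\bar u_\eta$ is nonzero (otherwise $Q=0$) and satisfies $x\bar u_\eta=\eta(x)\bar u_\eta$ for all $x\in\tn_+$. Since $Q$ is generated by $\bar u_\eta$, it is immediately a Whittaker module of type $\eta$ in the sense of Definition \ref{Whittaker} when $\eta\ne0$; this settles part (1) at once, as the irreducibility is assumed. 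The substance is to check that $\bar u_\eta\ne0$, which follows because the trivial submodule spanned by $\bar u_\eta$ together with irreducibility forces $\bar u_\eta$ to generate all of $Q$, and a nonzero quotient has a nonzero cyclic image.

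For part (2), when $\eta=0$ the vector $\bar u_0$ satisfies $\tn_+\bar u_0=0$, so it is annihilated by the positive part. To conclude it is a highest weight vector in the sense of Definition \ref{highest weight}, I would show that $\bar u_0$ is in fact a weight vector for the Cartan subalgebra $\hat\h=\h\oplus\C K$. The key point is that $u_0$ is already an $\hh$-weight vector in $M(0)$: since $\eta=0$ restricts to $0$ on $\h$ (as $\h\subseteq\tn_+$ only through $\n_{0,+}$, but more precisely $\eta$ vanishes on all of $\tn_+\supseteq\n_+$ and on $\g\ot t\C[t]$), the generator $u_0$ is killed by $\tn_+$ and carries a definite $\hat\h$-action. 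Concretely, for $h\in\hat\h$ one has $h u_0=\gamma(h)u_0$ for the linear functional $\gamma\in\hat\h^*$ determined by the $\tn_+$-module structure of $\C u_0$ extended to $\hat\h$ (with $\gamma|_{\tn_+\cap\hat\h}=0$ and $\gamma(K)$ the central character). Passing to the quotient, $\bar u_0$ is a highest weight vector of weight $\gamma$ that generates $Q$, so $Q$ is a highest weight module; being irreducible, $Q\cong V(\gamma)$ by the uniqueness recalled after Definition \ref{highest weight}.

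The main obstacle I anticipate is the bookkeeping in part (2) verifying that $u_0$ genuinely carries a one-dimensional $\hat\h$-weight rather than merely being $\tn_+$-invariant; one must confirm that $\hat\h$ normalizes $\C u_0$ in $M(0)$. This is where the precise description of $\tn_+$ matters: since $\hat\h=\h\oplus\C K$ and $\h=\h\cap\tn_+\oplus(\text{complement})$ is delicate, I would instead argue directly that $[\hat\h,\tn_+]\subseteq\tn_+$ so that $\hat\h$ preserves the $\tn_+$-eigenspace condition $\tn_+ v=0$, and that $K$ acts as a scalar by centrality. Thus the subspace of $\tn_+$-invariants in $M(0)$ is $\hat\h$-stable, and since $\bar u_0$ lies in it with $Q$ irreducible, a standard argument extracts an $\hat\h$-eigenvector generating $Q$. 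The only genuinely nontrivial input is that this invariant subspace is spanned by $\bar u_0$, which follows from the PBW structure of $M(0)=U(\tn_-)u_0$ together with the fact that $\tn_+$ acts with no nonzero invariants on $U(\tn_-)$ beyond degree zero.
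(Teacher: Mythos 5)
Your part (1) is fine and coincides with the paper's treatment (the paper simply notes it is clear). The gap is in part (2), and it sits at exactly the point that carries all the difficulty of the lemma. Your ``key point'' is false: $M(0)=\ind_{\tn_+}^{\hg}\C u_0$ is induced from $\tn_+$ \emph{alone}, and $\hat\h\cap\tn_+=0$, so by PBW $M(0)\cong U(\hat\h\oplus\tn_-)\ot\C u_0$ is a free $U(\hat\h)$-module; $\C u_0$ is not $\hat\h$-stable and $u_0$ is not a weight vector. Your PBW description $M(0)=U(\tn_-)u_0$ is wrong for the same reason, and the claim that the $\tn_+$-invariants are spanned by $u_0$ fails badly: since $[\tn_+,\hat\h]\subseteq\tn_+$ and $\eta=0$, an easy induction shows the whole infinite-dimensional space $U(\hat\h)u_0$ consists of $\tn_+$-invariants. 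Moreover, by the universal property of $M(0)$ every Verma module over $\hg$, hence every $V(\gamma)$ with $\gamma\in\hat\h^*$, is a quotient of $M(0)$; so the weight $\gamma$ cannot be ``determined by the $\tn_+$-module structure of $\C u_0$ extended to $\hat\h$'' as you assert. This is also why the lemma says \emph{some} $\gamma$, and why the paper's Remark 5.4 notes that the induced module has infinitely many non-isomorphic maximal submodules.

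Your fallback is also not a proof: it is true that $[\hat\h,\tn_+]\subseteq\tn_+$ makes the invariant space $Q^{\tn_+}$ of an irreducible quotient $Q$ an $\hat\h$-submodule, but there is no ``standard argument'' extracting an $\hat\h$-eigenvector from it, because $Q^{\tn_+}$ is not known to be finite dimensional (a priori it contains $U(\hat\h)\bar u_0$), and an abelian Lie algebra acting on an infinite-dimensional space need not have any eigenvector; assuming $Q^{\tn_+}=\C\bar u_0$ is circular, since that is equivalent to what must be proved. Upgrading ``there is a nonzero vector killed by $\tn_+$'' to ``there is a highest weight vector'' is precisely the nontrivial content here, and it is what the paper imports from Theorem 1 of \cite{MZ} (the characterization of simple highest weight modules). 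Since that theorem is stated for $\tg$ rather than for $\hg$, the paper must make a further detour: induce $W$ up to the $\tg$-module $W[d]$, show $W[d]$ is reducible and has a maximal submodule $Y$ meeting $W$ trivially, apply \cite{MZ} to the irreducible quotient $W[d]/Y$ (which still contains the image of $\bar u_0$), and then descend the highest weight property back to $W$. Your proposal contains no substitute for this step, so part (2) remains unproven.
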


\begin{proof} Part (1) is clear. Now suppose $\eta=0$, and $W$ is an irreducible quotient $\hg$-module of
$\hW(0)$. 
Consider the induced $\tg$-module $\widetilde
W=W[d]=\text{ind}_{\hg}^{\tg}W$.

If $\widetilde W$ is irreducible, using Theorem 1 in \cite{MZ} we
know that $\widetilde W$ is an irreducible  highest weight
$\tg$-module which is a contradiction to the fact that the action of
$d$ is free on $\widetilde W$.

So $\widetilde W$ is not irreducible. Let $Y$ be a nonzero proper
$\tg$-submodule of $\widetilde W$. It is obvious that $Y\cap W=0$,
since otherwise $$\tW=U(\tg)W=U(\tg)U(\hg)(Y\cap W)\subseteq Y.$$ Because $W$ is irreducible over $\hg$,
 $\widetilde W$ has a maximal submodule over $\tg$.
Take
a maximal $\tg$-submodule $X$ of $\widetilde W$ and choose any
$$w=d^n\ot w_n+d^{n-1}\ot w_{n-1}+...+w_0\in X, \ w_i\in W,\
w_n\ne0$$ such that $n$ is minimal. For any $x\in U(\hg)$, $k\in\Z$
and $l\in\Z_+$, by computing $d^lxw\in X$ and noticing that $W$ is
an irreducible $\hg$-module, we see that for any $v\in W$ there is
an element $d^{n+l}\otimes v+\sum\limits_{i=0}^{n+l-1}d^i\otimes v_i\in X$
for some $v_i\in W$, and hence $X+W\oplus (d\ot W) \oplus ...\oplus
(d^n\ot W)=\tW$. In particular,
 $$\widetilde W/X\cong \frac{W\oplus (d\ot W) \oplus ...\oplus (d^n\ot W)}{X\cap \big(W\oplus (d\ot W) \oplus ...\oplus (d^n\ot W)\big)}$$
is a nontrivial irreducible $\tg$-module. Using Theorem 1 in
\cite{MZ} we know that $n=1$ and
 $\widetilde W/X$ is a highest weight $\tg$-module. If we consider  $\widetilde W/X$ as a $\hg$-module we have  $\widetilde W/X\cong W$. Hence $W$ is a highest
 weight $\hg$-module. Part (2) follows.
\end{proof}

Using the above two lemmas, we can give the characterization of
irreducible $\tg$-modules such that the actions of elements $\tn_+$
are locally finite.

\begin{thm}\label{char} Let $V$ be an irreducible $\tg$-module on which
the action of each root vector in $\tn_+$ is locally finite. Then
$V$ is isomorphic to one of the following $\tg$-module:

\enumerate \item an irreducible highest weight module $\widetilde
V(\gamma)$ for some $\gamma\in \th^*$;
\item an irreducible Whittaker module $\widetilde W(\eta)$ for a nonzero Lie algebra homomorphism $\eta:\tn_+\to \C$;
\item $(\hat V(\gamma)\ot E(\pl,\pa))[d]$ for some
$\gamma\in\hat{\h}^*$, $\pa\in\widetilde{\C^m}$ and $\pl\in
(\h^*)^m\setminus\{0\}$ with all $\l_i$ dominant, where $\hat
V(\gamma)$ is the irreducible highest weight module over $\hg$;
\item $(W(\eta)\ot E(\pl,\pa))[d]$ for a nonzero Lie algebra
homomorphism $\eta:\tn_+\to \C$, $\pa\in\widetilde{\C^m}$, and
$\pl\in (\h^*)^m\setminus\{0\}$ with all $\l_i$
dominant.\endenumerate
\end{thm}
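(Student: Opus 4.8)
The plan is to leverage the structural result of Theorem~\ref{local finite} to reduce the classification to the finite-dimensional module theory developed in Sections~3 and~4. By that theorem, any irreducible $\tg$-module $V$ on which each weight vector of $\tn_+$ acts locally finitely contains a nonzero finite-dimensional $\tn_+$-submodule $W$ annihilated by some $I(f(t))$. First I would pass to an irreducible $\tn_+$-submodule $S$ of $W$; since $S$ is finite-dimensional and killed by $I(\tilde f)$ for $\tilde f(t)=\prod_{i=0}^m(t-a_i)^{n_i}$ (with $a_0=0$), Lemma~\ref{I(f)} applies and tells us that $S\cong S(\eta,\pl,\pa)$ for some Lie algebra homomorphism $\eta:\tn_+\to\C$ and dominant $\pl=(\l_1,\dots,\l_m)$. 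This identifies the ``seed'' $\tn_+$-module inside $V$ and splits the analysis according to whether $\eta=0$ or $\eta\ne 0$, and whether $\pl=0$ or $\pl\ne 0$.

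\textbf{The key steps.} Having found $S\hookrightarrow V$, the universal property of induction gives a surjective $\tg$-module homomorphism $\tM(\eta,\pl,\pa)=\ind_{\tn_+}^{\tg}S(\eta,\pl,\pa)\twoheadrightarrow V$, so $V$ is an irreducible quotient of $\tM(\eta,\pl,\pa)$. Now I invoke the two preceding lemmas. By Lemma~\ref{phi-iso} we have $\hM(\eta,\pl,\pa)\cong E(\pl,\pa)\ot M(\eta)$ as $\hg$-modules, hence $\tM(\eta,\pl,\pa)\cong(E(\pl,\pa)\ot M(\eta))[d]$ as $\tg$-modules. By Theorem~\ref{E[d]} (the generalized version, our last Theorem in Section~3), every $\tg$-submodule of $(E(\pl,\pa)\ot M(\eta))[d]$ has the form $(E(\pl,\pa)\ot N)[d]$ for an $\hg$-submodule $N\subseteq M(\eta)$; consequently every irreducible quotient is $(E(\pl,\pa)\ot W(\eta))[d]$ where $W(\eta)$ is an irreducible quotient of $M(\eta)$. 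The Whittaker-versus-highest-weight lemma then pins down $W(\eta)$: if $\eta\ne0$ it is an irreducible Whittaker $\hg$-module, and if $\eta=0$ it is an irreducible highest weight module $V(\gamma)$. Tracking the cases: when $\pl\ne 0$ this yields exactly alternatives (3) and (4). When $\pl=0$ the factor $E(\pl,\pa)$ collapses (it is trivial), so $V$ is just an irreducible quotient of $M(\eta)[d]$ over $\tg$, which is an irreducible Whittaker $\tg$-module $\widetilde W(\eta)$ if $\eta\ne0$ (alternative (2)) or an irreducible highest weight $\tg$-module $\widetilde V(\lambda)$ if $\eta=0$ (alternative (1)).

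\textbf{The main obstacle} will be the $\pl=0$, $\eta=0$ case, where one must show that the resulting irreducible $\tg$-quotient is genuinely a highest weight module over all of $\tg$ (not merely over $\hg$), i.e.\ that the degree derivation $d$ acts semisimply with a top weight. Here I expect to reuse the argument embedded in the proof of the second lemma: the induced module $W[d]=\ind_{\hg}^{\tg}W$ for an irreducible highest weight $\hg$-module $W$ fails to be irreducible, and by passing to an appropriate irreducible subquotient and applying Theorem~1 of \cite{MZ} (the classification of irreducible $\tg$-modules with locally nilpotent action of the nilpotent radical) one deduces the highest weight structure. I would therefore organize the final argument so that the reduction to \cite{MZ} is clean: the locally finite hypothesis on weight vectors of $\tn_+$, combined with $I(f)$ annihilating the seed, forces the relevant positive-part action to be locally nilpotent on a suitable submodule, placing us exactly in the setting where \cite{MZ} applies. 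The remaining cases are then formal consequences of the isomorphism $\tM\cong(E(\pl,\pa)\ot M(\eta))[d]$ together with the irreducibility results of Section~3, so the real content is concentrated in correctly handling the collapse of $E(\pl,\pa)$ and the appeal to \cite{MZ}.
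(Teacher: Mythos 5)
Your proposal is correct and follows essentially the same route as the paper: Theorem 4.1 plus Lemma 4.3 produce the seed $S(\eta,\pl,\pa)\hookrightarrow V$, induction and Lemma 5.1 identify $\tM(\eta,\pl,\pa)$ with $(E(\pl,\pa)\ot M(\eta))[d]$, and the last theorem of Section 3 together with Lemma 5.2 classifies the irreducible quotients. Your case split $\pl=0$ versus $\pl\neq 0$ coincides with the paper's split on whether the irreducible $\tn_+$-submodule $S$ is one-dimensional, and your appeal to Theorem 1 of [MZ] in the $\pl=0$, $\eta=0$ case is exactly what the paper's terser treatment of that case implicitly relies on.
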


\begin{proof} By Theorem \ref{local finite}, $V$ has
a finite-dimensional $\tn_+$-submodule and hence an irreducible
finite-dimensional $\tn_+$-submodule, say $S$. If this $S$ is
one-dimensional, we see that $V$ is a highest weigh $\tg$-module or
an irreducible Whittaker $\tg$-module by Lemma \ref{quotient-hW}.
Otherwise, by Theorem 4.1 and Lemmas 4.2 and 4.3, this irreducible
$\tn_+$-submodule is isomorphic to $S(\eta,\pl,\pa)$ for a  Lie
algebra homomorphism $\eta:\tn_+\to \C$, $\pa\in\widetilde{\C^m}$,
and $\pl\in (\h^*)^m$ with all $\l_i$ dominant and at least one
nonzero. Then we know that $V$ is an irreducible quotient of the
$\tg$-module $\widetilde
M(\eta,\pl,\pa)=\Ind_{\tn_+}^{\tg}S(\eta,\pl,\pa)=(E(\pl,\pa)\ot
\hW(\eta))[d].$ The theorem follows from Theorem \ref{E ot L[d]} and
Lemmas \ref{phi-iso}, \ref{quotient-hW}.
\end{proof}

\section{Isomorphism classes of irreducible $\tg$-modules}

In this section we determine the necessary and sufficient conditions
for two irreducible modules of the form $(E(\pl,\pa)\ot \hat
V(\gamma))[d]$ or $(E(\pl,\pa)\ot W(\eta))[d]$ to be isomorphic,
where $\eta:\tn_+\to \C$ is  a nonzero Lie algebra homomorphism,
$\pa\in\widetilde{\C^m}$, $\pl=(\lambda_1,...,\lambda_m)\in
(\h^*)^m\setminus\{0\}$ and $\gamma\in\hat{\h}^*$. We have assumed
that $\hat V(\gamma)$ is the irreducible highest weight $\hg$-module
of highest weight $\gamma$ and $W(\eta)$ is an irreducible Whittaker
$\hg$-module.

\begin{lem}\label{iso_M[d]} Let $M$ and $M'$ be irreducible $\hg$-modules. Then
$M[d]\cong M'[d]$ as $\tg$-modules if and only if $M\cong M'$ as
$\hg$-modules.
\end{lem}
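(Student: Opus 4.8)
The plan is to prove the two implications separately, with all the substance in the ``only if'' direction. The ``if'' direction is immediate from the functoriality of induction: if $\theta\colon M\to M'$ is an $\hg$-module isomorphism, then $\ind_{\hg}^{\tg}\theta\colon M[d]\to M'[d]$, $u\ot v\mapsto u\ot\theta(v)$, is a $\tg$-module isomorphism.

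For the converse, suppose $\psi\colon M[d]\to M'[d]$ is a $\tg$-module isomorphism. Recall from the construction $M[d]=\C[d]\ot M$ the filtration $M^{(n)}=\sum_{i=0}^n d^i\ot M$ (with $M^{(-1)}=0$): exactly the computation in the proof of Theorem \ref{M[d]}, applied to the present arbitrary irreducible $\hg$-module in place of $E(\pl,\pa)\ot L$, shows that each $M^{(n)}$ is an $\hg$-submodule and that $M^{(n)}/M^{(n-1)}\to M$, $d^nv\mapsto v$, is an $\hg$-module isomorphism; the same holds verbatim for $M'[d]$. In particular the bottom layer $M^{(0)}=1\ot M$ is an $\hg$-submodule isomorphic to $M$. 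First I would set $N=\psi(1\ot M)\subseteq M'[d]$; since $\psi$ is in particular $\hg$-linear, $N$ is an irreducible $\hg$-submodule isomorphic to $M$. The subspaces $N\cap {M'}^{(n)}$ form an increasing chain of $\hg$-submodules of $N$ whose union is all of $N$, because every element of $M'[d]$ has finite $d$-degree. By irreducibility of $N$ each term is either $0$ or $N$, so there is a least index $n$ with $N\subseteq {M'}^{(n)}$, and then necessarily $N\cap {M'}^{(n-1)}=0$. Composing the inclusion $N\hookrightarrow {M'}^{(n)}$ with the $\hg$-linear projection $\pi_n\colon {M'}^{(n)}\to {M'}^{(n)}/{M'}^{(n-1)}\cong M'$ yields an $\hg$-module map $N\to M'$ whose kernel is $N\cap {M'}^{(n-1)}=0$; it is therefore injective, and its image is a nonzero $\hg$-submodule of the irreducible module $M'$, hence all of $M'$. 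Thus $M\cong N\cong M'$ as $\hg$-modules.

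The one point that requires care—and the reason one cannot simply project $M'[d]$ onto its degree-zero part—is that the naive bottom projection $\sum_i d^i\ot w_i\mapsto w_0$ is \emph{not} $\hg$-linear: for a homogeneous $x\ot t^k$ with $k\neq0$ the $\hg$-action mixes $d$-degrees downward, so the degree-zero subspace is not an $\hg$-module complement. The remedy is to use instead the top-degree projection $\pi_n$ onto the single graded layer in which $N$ actually lives, which \emph{is} $\hg$-linear by the filtration facts above; the heart of the argument is therefore the irreducibility input that pins $N$ inside one such layer, forcing $N\cap{M'}^{(n-1)}=0$. I expect this localization step to be the main (and essentially only) obstacle, and it is handled cleanly by the chain-stabilization argument above.
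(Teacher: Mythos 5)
Your proof is correct and follows essentially the same route as the paper's: the paper also handles the nontrivial direction by sending $u\in M$ to the top-degree coefficient of its image in $M'[d]$, using that the $\hg$-action never raises $d$-degree together with irreducibility to see that this top degree is constant (your statement $N\cap {M'}^{(n-1)}=0$) and that the resulting top-layer projection is an $\hg$-isomorphism onto $M'$. The only cosmetic difference is that the paper deduces constancy of the top degree from cyclicity ($M=U(\hg)u$ plus degree monotonicity) rather than your chain-stabilization argument, and works with explicit coefficients instead of the filtration quotients ${M'}^{(n)}/{M'}^{(n-1)}\cong M'$.
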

\begin{proof}  The sufficiency is clear. We need only to prove the
necessity.
Let $\widetilde \phi: M[d]\rightarrow M'[d]$ be a $\tg$-module
isomorphism. We can define a map $\phi: M\rightarrow M'$ as follows.
Take any nonzero $u\in M$, there exist unique $l_u\in\Z_+$ and
$v_i\in M'$ with $v_{l_u}\neq 0$ such that $\widetilde
\phi(u)=\sum \limits_{i=0}^{l_u}d^i v_i$. Define $\phi(u)=v_{l_u}$. Note
that for any $x\in \hg$, we have
\begin{equation}\label{tphi}
\widetilde\phi(xu)=x\widetilde\phi(u)=\sum_{i=0}^{l_u}x(d^iv_i)=d^{l_u}xv_{l_u}+\sum_{i=0}^{l_u-1}d^ixv_i+\sum_{i=0}^{l_u}[x,
d^i]v_i. \end{equation} This together with the fact $M=\UU(\hg)u$
implies that $l_{u'}\leq l_u$ for all $u'\in M$ and hence
$l_u=l_{u'}$ for all nonzero $u,u'\in M$. Then the linearity of
$\varphi_i$ follows from its definition and \eqref{tphi} yields
$\phi(xu)=xv_{l_u}=x\phi(u)$. As a result, $\phi$ is a nonzero
$\hg$-module homomorphism and hence an isomorphism between
$\hg$-modules $M$ and $M'$.
\end{proof}

\begin{lem}\label{iso_E ot L} Let $\pl\in (\h^*)^m\setminus\{0\}$, 
$\pl'\in (\h^*)^{m'}$, $\pa\in \widetilde{\C^m}$,  $\pa'\in
\widetilde{\C^{m'}}$, and let $L, L'$ be $\hg$-modules such that
for any $w\in L$ and $w'\in L'$ we have $(\g\ot t^{k}\C[t])w=(\g\ot
t^{k}\C[t])w'=0$ for some $k\in\N$ depending on $w, w'$. Then
$E(\pl,\pa)\ot L\cong E(\pl',\pa')\ot L'$ if and only if
$E(\pl,\pa)\simeq E(\pl',\pa')$ and $L\cong L'$.
\end{lem}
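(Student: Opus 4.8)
The plan is to prove both directions, with the forward direction being the substantive one. The ``if'' direction is immediate: if $\phi_0\colon E(\pl,\pa)\to E(\pl',\pa')$ and $\psi\colon L\to L'$ are $\hg$-module isomorphisms, then $\phi_0\ot\psi$ is an $\hg$-module isomorphism $E(\pl,\pa)\ot L\to E(\pl',\pa')\ot L'$, since the comultiplication defining the tensor action is compatible with each factor.

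For the ``only if'' direction, suppose $\Phi\colon E(\pl,\pa)\ot L\to E(\pl',\pa')\ot L'$ is an $\hg$-module isomorphism. First I would recover $\pl,\pa$ from the module $E(\pl,\pa)\ot L$ intrinsically. The key observation is that for $v\in L$ the elements $(\g\ot t^k\C[t])$ act locally nilpotently (indeed kill $v$ for large $k$), whereas on $E(\pl,\pa)$ the operators $x\ot t^k$ act via the evaluation at the points $a_i$. So the ``semisimple part at infinity'' of the $\h\ot\C[t,t^{-1}]$-action encodes the data $\pl,\pa$. Concretely, I would consider the action of the commutative algebra $\h\ot\C[t,t^{-1}]$ and look at generalized eigenspace/weight decompositions: since the $L$-factor contributes only nilpotent pieces for high powers of $t$, the generalized weights (as functions $h\ot t^k\mapsto\sum_i a_i^k\mu_i(h)$ coming from $E$) are determined by the module structure. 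This identifies the set $\{(a_i,\l_i)\}$ up to permutation, which is exactly the statement that $E(\pl,\pa)\cong E(\pl',\pa')$ (via Remark~\ref{remark}, matching the nonzero $\l_i$ with their evaluation points $a_i$).

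Having established $E(\pl,\pa)\cong E(\pl',\pa')$, I would fix such an isomorphism and reduce to the case $E(\pl,\pa)=E(\pl',\pa')=:E$, so that $\Phi\colon E\ot L\to E\ot L'$ is an $\hg$-isomorphism. Now I want to produce an isomorphism $L\cong L'$. The natural route is to apply Theorem~\ref{tensor}: since $\Phi$ is an isomorphism, its graph, or more usefully the image $\Phi(E\ot N)$ of submodules, lets me transport the submodule lattice. More directly, I would use the explicit description coming from the proof of Theorem~\ref{tensor}: for the highest weight vector $v_{\pl}\in E$ there are elements $x\in U(\hg)$ acting on $E$ as rank-one ``projections'' (via the Jacobson Density Theorem), and these let me extract the $L$-factor. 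Specifically, pick $x\in U(\hg)$ with $xv_{\pl}=v_{\pl}$ that kills a complementary weight space; applying $x$ to $\Phi(v_{\pl}\ot u)$ and comparing, the map $u\mapsto(\text{the }L'\text{-component of }\Phi(v_{\pl}\ot u)\text{ along }v_{\pl})$ should be a well-defined $\hg$-module homomorphism $L\to L'$, and symmetrically from $\Phi^{-1}$ I get a map back; irreducibility of $L,L'$ then forces these to be mutually inverse isomorphisms.

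The main obstacle I anticipate is the first step, cleanly extracting the invariants $\pl,\pa$ from the abstract module, because the $L$-factor can contribute complicated (though eventually trivial for large $k$) behavior to the $\h\ot\C[t,t^{-1}]$-action and can shift the ``visible'' weights at small $k$. The device that resolves this is precisely the local-finiteness hypothesis $(\g\ot t^k\C[t])w=0$ for large $k$: passing to large powers of $t$ annihilates the $L$-contribution, so the action of $h\ot t^k$ for $k\gg0$ is governed purely by $E(\pl,\pa)$, whose eigenvalues are the power sums $\sum_i a_i^k\l_i(h)$. Separating these power sums (a Vandermonde-type argument in the distinct $a_i$) recovers each pair $(a_i,\l_i)$ with $\l_i\neq0$, matching Remark~\ref{remark}. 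The second, algebraic step then goes through essentially by reusing the density argument already in the proof of Theorem~\ref{tensor}, so I do not expect genuine difficulty there beyond bookkeeping.
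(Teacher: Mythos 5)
Your plan is sound in outline, and its two halves deserve separate verdicts.

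The first half (recovering $E(\pl,\pa)\cong E(\pl',\pa')$) is a genuinely different route from the paper's. The paper never looks at weights: it fixes $w\in L$, writes $\widetilde\phi(v\ot w)=\sum_i v_i\ot w_i$ with the $w_i$ independent, uses Lemma~\ref{iso} plus Jacobson density to produce linear maps $\phi_i\colon E(\pl,\pa)\to E(\pl',\pa')$ commuting with the action of $\g\ot t^{k}\C[t]$, and then upgrades these to $\hg$-module maps by an annihilator comparison (showing $f_{\pl,\pa}=f_{\pl',\pa'}$ via Remark~\ref{remark} and using $\g\ot f_{\pl,\pa}\C[t^{\pm1}]+\g\ot t^{k}\C[t]=\g\ot\C[t^{\pm1}]$). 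That argument yields $E(\pl,\pa)\cong E(\pl',\pa')$ \emph{and}, after Dixmier--Schur, the factorization $\widetilde\phi(u\ot w)=\phi_1(u)\ot w'$ in a single stroke. Your alternative --- eventual eigenvalue functions of $h\ot t^{j}$ for $j\gg0$ (the $L$-factor is literally killed, not just nilpotent, on each fixed vector), separated by a Vandermonde argument and then fed into the known isomorphism criterion for evaluation modules --- is viable: the direction of the criterion you need is the easy one, and the paper itself treats it as known. It is arguably more conceptual, but its cost is that it produces no concrete intertwiner, so the entire second half must still be built from scratch.

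In the second half there is a real gap as written, though it is fillable with the tools you name. First, your projections cannot be taken merely in $U(\hg)$: for ``applying $x$ to $\Phi(v_{\pl}\ot u)$'' to affect only the $E$-factors you need $x\in U(\g\ot t^{k}\C[t])$ with $k$ large relative to $u$ \emph{and} to the $L'$-components of $\Phi(v_{\pl}\ot u)$; only then does $x(v\ot u)=(xv)\ot u$ hold on both sides, and density inside $U(\g\ot t^{k}\C[t])$ is available exactly because of Lemma~\ref{iso}. Second, and more substantively, the projection trick at the single vector $v_{\pl}$ only gives $\Phi(v_{\pl}\ot u)\in\C v_{\pl}\ot L'$; that alone does not let you verify $\hg$-equivariance of $\psi$, because computing $\Phi\bigl(y(v_{\pl}\ot u)\bigr)$ for $y\in\hg$ involves $\Phi(yv_{\pl}\ot u)$, i.e.\ other weight vectors of $E$, and the comparison does not close. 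You need the full diagonal form $\Phi(v\ot u)=v\ot\psi(u)$ for all $v\in E$: either run the same projection argument at every weight vector $v_\nu$ and use density elements sending $v_\nu\mapsto v_\mu$ to identify the resulting maps $\psi_\nu$, or equivalently note that the $\g\ot t^{k}\C[t]$-intertwiners $E\to E$ extracted from $\Phi$ are scalars, since $E$ remains irreducible over $\g\ot t^{k}\C[t]$ by Lemma~\ref{iso}. Once the diagonal form is established, equivariance and bijectivity of $\psi$ (via the symmetric argument for $\Phi^{-1}$) are immediate. This diagonalization is precisely the content that occupies most of the paper's proof, so describing it as ``bookkeeping'' underestimates it; your step 1, by contrast, is where you anticipated the difficulty, but it is the step that was already safe.
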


\begin{proof} We know that both $E(\pl,\pa)$ and $E(\pl',\pa')$ are irreducible $\hg$-modules.
Let $\widetilde \phi: E(\pl,\pa)\ot L\rightarrow E(\pl',\pa')\ot L'$ be a $\hg$-module
isomorphism. Fix  nonzero $v\in E(\pl,\pa)$ and nonzero  $w\in L$. Let
$$\widetilde{\phi}(v\ot w)=\sum_{i=1}^sv_i\ot w_i,$$
where $v_i\in E(\pl',\pa'), w_i\in L'$ such that $w_i$'s are linearly independent. We may also assume that each $v_i\ne0$.
Choose any $k\in\N$ such that $(\g\ot
t^{k}\C[t])w=(\g\ot t^{k}\C[t])w_i=0$ for all $i$.
Note that
\begin{equation}\label{tphi2}
\widetilde{\phi}(xv\ot w)=x\widetilde{\phi}(v\ot w)=\sum_{i=1}^sxv_i\ot w_i,\ \forall\ x\in \g\ot t^{k}\C[t].
\end{equation}
Combining this with the fact that $E(\pl,\pa)=U(\hg)v$ and using
Lemma \ref{iso}, we see that, for any
$u\in E(\pl,\pa)$,  $$\widetilde{\phi}(u\ot
w)=\sum_{i=1}^su_i\ot w_i$$ where $u_i\in E(\pl',\pa')$. Now we can define nonzero vector space
homomorphisms $\phi_i:E(\pl,\pa)\to E(\pl',\pa')$ by $\phi_i(u)=u_i$
for all $u\in E(\pl,\pa)$ and we have $$\widetilde{\phi}(u\ot
w)=\sum_{i=1}^s\phi_i(u)\ot w_i,\ \forall\ u\in E(\pl,\pa).$$ Note
that \eqref{tphi2} with $v$ replaced by $u$ indicates that
$\phi_i(xu)=x\phi_i(u)$ for $x\in\g\ot t^{k}\C[t]$ and $u\in
E(\pl,\pa)$. The irreducibility of $E(\pl', \pa')$ and Lemma
\ref{iso} shows that all $\phi_i$ are surjective, i.e.,
$\phi_i(E(\pl,\pa))=E(\pl',\pa')$.

Let  $f_{\pa}(t)=\prod \limits_{j=1}^m(t-a_j)$,
$f_{\pl,\pa}(t)=\prod \limits_{j=1,\l_j\neq 0}^m(t-a_j)$ and similar for
$f_{\pa'}$ and $f_{\pl',\pa'}$. Now we have
$0=\phi_i(xu)=x\phi_i(u)$ for all $u\in E(\pl,\pa)$ and $x\in
f_{\pl,\pa}(t)t^k\C[t]$. Since $\varphi_i$ is surjective, we get
$xE(\pl',\pa')=0$ for all $x\in f_{\pl,\pa}(t)t^k\C[t]$, yielding
from Remark \ref{remark} that
$$f_{\pl,\pa}(t)t^k\in\Ann(E(\pl',\pa'))=f_{\pl',\pa'}(t)\C[t^{\pm1}]\oplus\C
K.$$ Thus we get $f_{\pl',\pa'}$ divides $f_{\pl,\pa}$ and
symmetrically $f_{\pl,\pa}$ divides $f_{\pl,\pa'}$. In particular,
$f_{\pl,\pa}=f_{\pl,\pa'}$, $\Ann(E(\pl,\pa))=\Ann(E(\pl',\pa'))$
and $\phi_i(xu)=0=x\phi_i(u)$ for $x\in\g\ot
f_{\pl,\pa}\C[t^{\pm1}]$.

Since $\g\ot f_{\pl,\pa}\C[t^{\pm1}]+\g\ot t^{k}\C[t]=\g\ot
\C[t^{\pm1}]$, we obtain that $\phi_i(xu)=x\phi_i(u)$ for
$x\in\g\ot\C[t^{\pm1}]$. Hence each $\phi_i$ is a $\hg$-module
homomorphism. Since $E(\pl,\pa)$ is a simple modules, $\phi_i\neq0$
is also injective. Consequently, each $\phi_i$ is a $\hg$-module
isomorphism. Thus $E(\pl,\pa)\simeq E(\pl',\pa')$.

It is well known that any endomorphism of a simple module over a
countably generated associative $\C$-algebra
  is a scalar (Proposition 2.6.5 and Corollary 2.6.6 in [D]).
  There
exist $a_i\in \C^*$ such that $\phi_i=a_i\phi_1$. Let
$w'=\sum_{i=1}^sa_iw_i$. Then
$$\widetilde{\phi}(u\ot w)=\phi_1(u)\ot w', \forall\ u\in E(\pl,\pa).$$
Applying $x\in\hg$ to both sides we deduce that
\begin{equation*}\begin{split}
\widetilde{\phi}(x(u\ot w))=&\widetilde{\phi}(xu\ot
w)+\widetilde{\phi}(u\ot x w)\\ =&{\phi}_1(xu)\ot
w'+\widetilde{\phi}(u\ot x w)\\ =&(x\phi_1(u))\ot
w'+\widetilde{\phi}(u\ot x w),\\
x\widetilde{\phi}(u\ot w)=&(x\phi_1(u))\ot w'+\phi_1(u)\ot xw'
\end{split}\end{equation*}
i.e., $\widetilde{\phi}(u\ot x w)=\phi_1(u)\ot xw'$. Thus we have a
vector space homomorphism $\phi': L\to L'$ such that
$$\widetilde{\phi}(u\ot w)={\phi}_1(u)\ot \phi'(w), \forall\ u\in
E(\pl,\pa),\,\, w\in L.$$ Applying $x\in\hg$ to both sides we deduce
that  $\phi'(x w)=x\phi'(w)$ for all $x\in\hg$ and all $w\in L$.
That is, $\phi': L\rightarrow L'$ is a nonzero $\hg$-module
homomorphism. Since $\widetilde{\varphi}$ is an isomorphism of
$\tg$-modules, we obtain that $\phi'$ is an isomorphism of
$\hg$-modules, i.e., $L\simeq L'$.\end{proof}

Note that the necessary and sufficient conditions for the
isomorphism $E(\pl,\pa)\simeq E(\pl',\pa')$ is well-known, see
Theorem 1.3(d) in \cite{CP3}. Combine Lemma \ref{iso_M[d]}, Lemma
\ref{iso_E ot L} and highest weight representation theory of $\hg$,
we can deduce the isomorphisms among the irreducible $\tg$-modules
$(E(\pl,\pa)\ot \hat V(\gamma))[d]$ and $(E(\pl,\pa)\ot
W(\eta))[d]$.

\begin{thm} Let $\pl \in (\h^*)^m\setminus\{0\}$, $\pa\in
\widetilde{\C^m}$, $\pl'\in (\h^*)^{m'}$, $\pa'\in
\widetilde{\C^{m'}}$, $\gamma, \gamma'\in\hh^*$ and $\eta, \eta':
\tn_+\rightarrow \C$ are nonzero Lie algebra homomorphisms.

 \enumerate \item $(E(\pl,\pa)\ot \hat V(\gamma))[d]$ and
$(E(\pl',\pa')\ot W(\eta))[d]$ can never be isomorphic;
\item
$(E(\pl,\pa)\ot \hat V(\gamma))[d]\simeq (E(\pl',\pa')\ot
V(\gamma'))[d]$ as $\tg$-modules if and only if $E(\pl,\pa)\simeq
E(\pl',\pa')$ as $\hg$-modules and $\gamma=\gamma'$; \item
$(E(\pl,\pa)\ot W(\eta))[d]\simeq (E(\pl',\pa')\ot W(\eta'))[d]$ as
$\tg$-modules if and only if $E(\pl,\pa)\simeq E(\pl',\pa')$ and
$W(\eta)\cong W(\eta')$ as $\hg$-modules.\endenumerate
\end{thm}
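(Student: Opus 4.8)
The plan is to reduce everything to the two structural isomorphism lemmas already established, namely Lemma \ref{iso_M[d]} (which says $M[d]\cong M'[d]$ as $\tg$-modules forces $M\cong M'$ as $\hg$-modules) and Lemma \ref{iso_E ot L} (which splits an isomorphism of tensor products into an isomorphism of the evaluation factors together with an isomorphism of the second factors). The crucial observation that lets me invoke these lemmas is that all the relevant $\hg$-modules $V(\gamma)$ and $W(\eta)$ satisfy the local finiteness hypothesis of Lemma \ref{iso_E ot L}: for a highest weight module, the positive part $\g\ot t^k\C[t]$ raises weight and hence annihilates any vector once $k$ is large; for a Whittaker module, each vector is killed by $\g\ot t^k\C[t]$ for large $k$ by the very definition of the Whittaker condition together with the PBW filtration. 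So the combined hypothesis $(\g\ot t^k\C[t])w=0$ for large $k$ holds for both $V(\gamma)$ and $W(\eta)$.

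For part (2) and part (3), first I would apply Lemma \ref{iso_M[d]} with $M=E(\pl,\pa)\ot V(\gamma)$ and $M'=E(\pl',\pa')\ot V(\gamma')$ (respectively the Whittaker versions), reducing the $\tg$-isomorphism of the $[d]$-induced modules to an $\hg$-isomorphism $E(\pl,\pa)\ot V(\gamma)\cong E(\pl',\pa')\ot V(\gamma')$. Then I would apply Lemma \ref{iso_E ot L}, which yields $E(\pl,\pa)\simeq E(\pl',\pa')$ as $\hg$-modules together with $V(\gamma)\cong V(\gamma')$ (resp.\ $W(\eta)\cong W(\eta')$). Finally I would invoke the standard highest weight representation theory of $\hg$: an isomorphism $V(\gamma)\cong V(\gamma')$ of irreducible highest weight modules is equivalent to $\gamma=\gamma'$, which gives exactly the stated conditions. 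The converse directions in (2) and (3) are immediate, since isomorphisms of the tensor factors visibly induce an isomorphism of the tensor products, which in turn induces one of the $[d]$-inductions by functoriality of $\ind_{\hg}^{\tg}$.

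Part (1) is where the real content lies: I must show a highest weight type module $(E(\pl,\pa)\ot V(\gamma))[d]$ can never be isomorphic to a Whittaker type module $(E(\pl',\pa')\ot W(\eta))[d]$ with $\eta\ne0$. Running the same reduction via Lemma \ref{iso_M[d]} and Lemma \ref{iso_E ot L} would reduce this to the claim that $V(\gamma)\not\cong W(\eta)$ as $\hg$-modules; but a cleaner and more robust route is to argue directly that the two types of modules are distinguished by an intrinsic invariant of the $\tn_+$-action. The natural invariant is the existence of a common eigenvector for the full abelian piece and in particular the way $\tn_+$ acts: a highest weight module possesses a nonzero vector annihilated by $\tn_+$ (a highest weight vector), whereas in a nonzero Whittaker module the fixed algebra homomorphism $\eta$ is nonzero, so no nonzero vector is annihilated by all of $\tn_+$ — indeed each Chevalley-type generator on which $\eta$ is nonzero acts as a nonzero scalar plus nilpotent on the Whittaker generator and can have no kernel vector fixed by the whole positive part. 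I expect the main obstacle to be making this dichotomy rigorous after inducing with $\ind_{\hg}^{\tg}$ and tensoring with $E(\pl,\pa)$, since the $[d]$-induction introduces a free $\C[d]$-action that can obscure where the $\tn_+$-fixed or $\eta$-eigenvectors live.

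To surmount this, I would track the $\tn_+$-action explicitly on the tensor-and-induced module, using the weight/filtration structure $M^{(n)}$ from Section 3. Concretely, in $(E(\pl,\pa)\ot V(\gamma))[d]$ one exhibits a $\tn_+$-annihilated vector of the form $d^0\ot(v_{\pl}\ot v_\gamma)$ after passing to a suitable lowest-$d$-degree component, whereas in $(E(\pl',\pa')\ot W(\eta))[d]$ the $\tn_+$-action on the minimal-degree component reproduces the nonzero Whittaker character $\eta$ on the evaluation-twisted generators and therefore admits no $\tn_+$-fixed vector. Since any $\tg$-isomorphism must match $\tn_+$-fixed vectors, and one side has them while the other does not, no isomorphism can exist; this is precisely the asymmetry that rules out (1). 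The bookkeeping of which Chevalley generators survive the twist by $E(\pl,\pa)$ under the isomorphism \eqref{alg_iso2} is the delicate part, but it is exactly the structure already made explicit in Section 5.
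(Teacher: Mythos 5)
Your parts (2) and (3) are correct and follow exactly the paper's (very terse) argument: apply Lemma \ref{iso_M[d]}, then Lemma \ref{iso_E ot L}, then the standard fact that $V(\gamma)\cong V(\gamma')$ if and only if $\gamma=\gamma'$; your explicit check that $V(\gamma)$ and $W(\eta)$ satisfy the annihilation hypothesis $(\g\ot t^k\C[t])w=0$ of Lemma \ref{iso_E ot L} is a worthwhile detail the paper leaves implicit. The genuine gap is in part (1), where you set aside that same reduction in favor of a ``direct'' invariant argument, and that argument rests on a false claim. You assert that $d^0\ot(v_{\pl}\ot v_{\gamma})$ is a $\tn_+$-annihilated vector of $(E(\pl,\pa)\ot V(\gamma))[d]$. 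It is not: for $h\in\h$ and $k\geq 1$ we have $h\ot t^k\in\tn_+$ and
\[
(h\ot t^k)(v_{\pl}\ot v_{\gamma})
=\big((h\ot t^k)v_{\pl}\big)\ot v_{\gamma}+v_{\pl}\ot\big((h\ot t^k)v_{\gamma}\big)
=\Big(\sum_{i=1}^m a_i^k\l_i(h)\Big)\,v_{\pl}\ot v_{\gamma},
\]
since $\tn_+v_\gamma=0$; because $\pl\neq 0$ and the $a_i$ are nonzero and pairwise distinct, this scalar is nonzero for suitable $h$ and $k$.

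In fact $(E(\pl,\pa)\ot V(\gamma))[d]$ has no $\tn_+$-annihilated vectors at all: if $u$ were one, its top $d$-degree component $u_N\in E(\pl,\pa)\ot V(\gamma)$ would also be $\tn_+$-annihilated; writing $u_N=\sum_j v_j\ot w_j$ with the $w_j$ linearly independent and all killed by $\g\ot t^{k_0}\C[t]$, one gets $xv_j=0$ for every $x\in\g\ot t^{k_0}\C[t]$, and by Lemma \ref{iso} this forces each $v_j$ to be annihilated by all of $\LL_{\pa}(\g)$, hence $v_j=0$ because $E(\pl,\pa)$ is a nontrivial irreducible $\LL_{\pa}(\g)$-module. (This is precisely why these modules appear in Theorem \ref{char} as a class separate from highest weight modules.) So the invariant you propose --- existence of $\tn_+$-fixed vectors --- is false on \emph{both} sides of part (1) and distinguishes nothing; your parallel claim for the Whittaker side (``no kernel vector fixed by the whole positive part'') is likewise asserted rather than proved. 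The correct route is the one you mentioned and then abandoned: by Lemma \ref{iso_M[d]} and Lemma \ref{iso_E ot L}, an isomorphism in (1) would yield $V(\gamma)\cong W(\eta)$ as $\hg$-modules, and this is impossible for $\eta\neq 0$ by a one-line comparison at the level of these factors: each Chevalley generator $e_i$ of $\tn_+$ acts locally nilpotently on $V(\gamma)$ (its weights, refined by the natural $t$-grading, are bounded above), whereas $e_i^N w_\eta=\eta(e_i)^N w_\eta\neq 0$ for all $N$ whenever $\eta(e_i)\neq 0$. The highest-weight/Whittaker dichotomy lives at the level of $V(\gamma)$ and $W(\eta)$, not at the level of the induced modules, where tensoring with $E(\pl,\pa)$ destroys all $\tn_+$-invariants.
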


\section{Tensor products of highest weight modules  and loop modules  over untwisted affine Kac-Moody algebras}

In this last section we will  use the ``shifting technique" to obtain a
necessary and sufficient condition for the tensor product of
irreducible integrable loop modules and irreducible integrable highest weight
modules over untwisted affine Kac-Moody algebras $\tg$ to be simple. This
problem was started by Chari and Pressley in [CP3], and studied by
them and other authors, for example  [Ad1-3]. We will start
with a slightly more general setting.

For any $\pa\in\widetilde{\C^m}$, $\pl=(\lambda_1,\lambda_1,\cdots  , \lambda_m) \in (\h^*)^m$, we have the
irreducible $\hg$-module  $E(\pl,\pa)$. For any $b\in\C$ let us
recall from \cite{E2} or \cite{CP3} the $\tg$-modules $E(\pl,\pa,
b)=E(\pl,\pa)\otimes \C[t,t^{-1}]$ with actions
\begin{equation}\label{E(b)}\aligned (x\otimes t^k)(v\otimes t^n)=((x\otimes t^k)v)\otimes t^{n+k},
\\K(v\otimes t^n)=0,\,\,\, d(v\otimes t^n)=(b+n)(v\otimes t^n),\endaligned\end{equation}
for all $k,n\in\Z$, $x\in\g$, $v\in E(\pl,\pa)$.  Note
that the $\tg$-modules $E(\pl,\pa, b)$ are integrable weight
$\tg$-modules if all $\lambda_i$ are dominant.

Let us recall from \cite{CP3} the graded associative algebra
homomorphism $\chi: U(\h\otimes \C[t,t^{-1}])\to \C[t,t^{-1}]$
defined by extending
\begin{equation}\label{chi}\chi(h\otimes t^n)=\Big(\sum_{i=1}^m\l_i(h)a_i^n\Big)t^n.\end{equation}
 By Theorem 1.3 (c) of \cite{CP3},  we see that  $E(\pl, \pa, b)$ is
irreducible,  if and only if  $\chi$
is surjective. In other words, for all $k\in\Z$ there exist $h_k\in
U(\h\otimes \C[t,t^{-1}])$ such that $[d, h_k]=kh_k$ and
$\chi(h_k)\neq0$. Although the proof in  \cite{CP3} is for finite dimensional $E(\pl,\pa)$, it is
valid even for any infinite dimensional $E(\pl,\pa)$.

Let $L$ be an irreducible $\tg$-module
such that \begin{equation} \label{L mod} L=\sum_{i\in\Z_+}L_{-i},\
{\rm where}\ L_{-i}=\{ v\in L\ |\ dv=(\zeta-i)v\}
\end{equation}
for some $\zeta\in\C$ with $L_{0}\neq0$. We call $\zeta$ the highest
degree of $L$. Note that irreducible highest weight $\tg$-modules
are such modules. There are certainly a lot of other such
irreducible modules.
 Similarly we have
$$U(\tg)=\sum_{i\in\Z}U(\tg)_{i},\ {\rm where}\ U(\tg)_{i}=\{ x\in U(\tg)\ |\
[d,x]=ix\}.$$ Any vector $x\in U(\tg)_i$ is called homogeneous of
degree $i$, denoted by $\deg(x)=i$. Clearly $U(\tg)_iL_{j}\subseteq
L_{i+j}$, where we make the convention that $L_{j}=0$ for $j\in\N$.
From now on we fix one such module $L$.

\begin{lem}\label{L irre} The above module $L$ is also irreducible as a $\hg$-module.
\end{lem}

\begin{proof} Let $W$ be a nonzero $\hg$-submodule of $L$. Take any nonzero $w\in W$.
Write $w=\sum\limits_{i=0}^nu_i$ with $u_i\in L_{-i}$ and $u_n\ne0$. Since
$L$ is simple as a $\tg$-module, there exists $x_1,\cdots,x_s\in \g$
and $j_1,\cdots, j_s\in \Z_+$ with  $j_1+\cdots + j_s=n$  such that
$w_0=(x_1\ot t^{j_1})\cdots (x_s\ot t^{j_s})w\in L_0\setminus\{0\}$.
Then the $\hg$-submodule generated by $w_0$ is a $\tg$-submodule of
$L$. Hence $W=L$. Consequently, $L$ is  irreducible as a
$\hg$-module.
\end{proof}

We will use the ``shifting technique" established in \cite{CGZ} to study the tensor product
  $\tg$-module
$E(\pl,\pa, b)\ot L$.
Define a new action of $\tg$ on the vector space $E(\pl,\pa)\ot
L\ot\C[t^{\pm1}]$ by
\begin{equation}\label{E(B, L)}\aligned (x\otimes t^k)\big(v\otimes  u\otimes t^n\big)=&\Big((x\otimes t^k )v\otimes  u+ v\otimes (x\otimes t^k) u\Big)\otimes t^{n+k},
\\
d\big(v\otimes u\otimes t^n\big)=&(\zeta+b+n)(v\otimes u\otimes t^n),\\ K(v\otimes u\otimes t^n)=&v\otimes Ku\otimes t^n,\endaligned\end{equation}
where $b\in\C$, $ u\in L$, $v\in E(\pl,\pa)$, $x\in\g$ and $k,
n\in\Z$. Denote this $\tg$-module as $\mathcal{E}(\pl,\pa, b, L)$.
On the other hand, we can form the tensor product $\tg$-module
$E(\pl,\pa, b)\ot L$. Then we have the $\tg$-module isomorphism
$\mathcal{E}(\pl,\pa, b, L)\cong E(\pl,\pa, b)\otimes L$ given by:
\begin{equation*}\begin{split}
v\ot u\ot t^n&\mapsto (v\ot t^{n+i})\ot u,\ \forall\ u\in L_{-i}.
\end{split}\end{equation*}

 In the rest of this section, we will consider the
module $\mathcal{E}(\pl,\pa, b, L)$ instead of $E(\pl,\pa, b)\otimes
L$, to deduce their irreducibilities. We first prove a property of
the function $\chi$ defined in \cite{CP3}. For any $p, k\in\N$,
define
$$U_k\big(\h\ot(t^p\C[t])\big)=\{x\in  U\big(\h\ot(t^p\C[t])\big)\,\,|\,\, [d, x]=kx\}.$$
We further have

\begin{lem}\label{cal E}
Suppose that  $\chi$ is onto.  Then for any $p\in\N$, we have
$\chi\big(U_k(\h\ot(t^p\C[t]))\big)\ne0$ for all sufficiently large $k\in\N.$ 
\end{lem}

\begin{proof} Let $S_p=\{k\in\N\,\,|\,\,\chi(U_k(\h\ot(t^p\C[t]))\ne0\}$, and $\langle
S_p\rangle $  be the subgroup of $\Z$ generated by $S_p$. It is
enough to show that $\langle S_p\rangle = \Z$. Otherwise suppose
that $\langle S_p\rangle =n_1\Z\ne \Z$ where $n_1>1$.  Then
$\chi(\h\ot t^{q+n_1k})=0$ for all $k\ge p$ and all $q=1, 2,...,
n_1-1,$ i.e.,
$$\sum_{i=1}^m\l_i(h)a_i^q(a_i^{n_1})^k=0, \,\,\,\forall\ h\in\h, k>p.$$
Note that although $a_i\ne a_j$ for $i\ne j$, we can have
$a_i^{n_1}=a_j^{n_1}$ for some $i\ne j$. Let $S_1, S_2, ..., S_s$ be
the partition of $\{1, 2, ..., m\}$ so that
$$i, j\in S_k \ \Leftrightarrow \ a_i^{n_1}= a_j^{n_1}.$$
We deduce  that
$$\sum_{i\in S_j} \l_i(h)a_i^q =0, \,\,\,\forall\ h\in\h,   q=1,2,\cdots, n_1-1.$$
Equivalently, $$\sum_{i\in S_j}\l_i(h)a_i^n=0, \ \forall \ h\in\h, \ n\notin n_1\Z.$$
We see that  $\chi$ is not surjective, contradicting the assumption.  The lemma is true.
\end{proof}

\begin{lem}\label{cal E submodule} 
Let $\pa\in\widetilde{\C^m}$ and
$\pl\in (\h^*)^m$ such that $E(\pl, \pa)$ is finite
dimensional and $E(\pl, \pa, b)$ is irreducible.  Let $W$ be a nonzero $\tg$-submodule of
$\mathcal{E}(\pl,\pa, b, L)$. Then $E(\pl,\pa)\ot L\ot t^n\subseteq
W$ for all sufficiently large $n\in\Z$.
\end{lem}

\begin{proof} Denote $\mathcal{E}=\mathcal{E}(\pl,\pa, b, L)$ and
$\mathcal{E}_n=E(\pl,\pa)\ot L\ot t^n$ for convenience. Elements in
$\mathcal{E}_n$ are called homogeneous. Denote
$f_{\pa}(t)=\prod \limits_{i=1}^m(t-a_i)$ as before. Let $W=\sum \limits_{n\in\Z}W_n\ot t^n$
for some subspaces $W_n\subseteq E(\pl,\pa)\ot L$ for all $n\in\Z$.

 Choose any $n'\in\Z$ such that $W_{n'}\neq 0$ and take a
nonzero $w\in W_{n'}$. We can write $w=\sum \limits_{i=1}^r w_i\ot u_i$ with
$0\neq w_i\in E(\pl,\pa)$ and $u_1,\cdots,u_r\in L$  such that
$u_1,\cdots,u_r$ are linearly independent. There is $p\in\Z_+$ such
that $(\g\ot t^{p}\C[t])u_i=0$ for all $i$.

Regard $E(\pl,\pa)$ as a weight module over $\LL_\pa$ (defined in
(\ref{L_a}))with respect to the Cartan subalgebra
$$\h_\pa=\big(\h\otimes t^p\C[t]\big)/\big((\h\ot t^p\C[t])\cap
(\h\ot f_\pa\C[t])\big).$$ By \eqref{LL_i} and Lemma \ref{iso}, we
see that $$\big(\n_+\otimes t^p\C[t]\big)/\big((\n_+\ot
t^p\C[t])\cap (\n_+\ot f_\pa\C[t])\big)$$ is just the positive part
(nilpotent radical) of $\LL_a$ with respect to $\h_\pa$.

Recall that $E(\pl,\pa)$ is a highest weight module
over the semisimple Lie algebra $\LL_\pa$ with highest weight $\pl$
(see the comments before Remark \ref{remark}). For any weight
$\pmu\in(\h^*)^m$, we say $\pmu\leq\pl$ if $\l_i-\mu_i$ is
nonnegative for all $i=1,\cdots,m$. For any element $v=v_1+\cdots
+v_s\in E(\pl,\pa)$ such that the $v_i$'s are nonzero weight vectors
of distinct weights $\pmu_i$, denote $\mathrm{wt}(v)=\min\{\pmu_i,
i=1,\cdots,s\}$.

If there is $w_i$ so that $\mathrm{wt}(w_i)\neq\pl$, say,
$\mathrm{wt}(w_1)<\pl$, then $w_1$ can not by annihilated by the
whole positive part of $\LL_\pa$. Hence $xw_1\neq 0$ for some
homogeneous $x\in \n_+\otimes t^p\C[t]$. Then we have
$$x(w\ot t^{n'})=x\Big(\sum_{i=1}^r w_i\ot u_i\ot t^{n'}\Big)=\sum_{i=1}^r xw_i\ot u_i\ot t^{n'+\deg(x)}$$
which is nonzero by the linear independence of $u_i, i=1,\cdots,r$.
It is clear that $ \mathrm{wt}(xw_i)
 > \mathrm{wt}(w_i)$ for $ i=1,2,\cdots,r$. Repeating
this process, we can reach a nonzero element $w'=\sum\limits_{i=1}^r
v'_i\ot u_i\ot t^{n''}\in W$ with either $\mathrm{wt}(v'_i)=\pl$ or
$v'_i=0$ for all $i=1,\cdots,r$. Then $w'=v_{\pl}\ot u\ot t^{n''}$
for some nonzero $u\in L$. By replacing $w$ with $w'$
and $n'$ with $n''$, we may assume that 
$w=v_{\pl}\ot u\in W_{n'}$.
\vskip5pt

\noindent{\bf Claim 1.} $E(\pl,\pa)\ot u_1\subseteq  W_n$ for all
sufficiently large $n$. \vskip5pt

From Lemma 7.2,    there exists $k'\in\N$ such that there exist
$h_k\in U_k(\h\otimes (t^p\C[t]))$ with  $\chi(h_k)\neq0$ for all
$k>k'$. By \eqref{LL_a(g)} we can find homogeneous $x_1, x_2, ...,
x_q\in U(\g\ot t^{p}\C[t])\setminus\C$ such that
$E(\pl,\pa)=\spn\{x_iv_{\pl}\ |\ i=1,2,...,q\}$, which is finite
dimensional. Let $\deg(x_i)=l_i$ and $l=\max\{l_i, 1\leq i\leq q\}$.
Then for any $k\geq k'+l$, we have
$$x_ih_{k-l_i}(v_{\pl}\ot u_1\ot t^{n'})=\chi(h_{k-l_i})x_iv_{\pl}\ot u_1\ot t^{n'+k},$$
which implies that $E(\pl,\pa)\ot u_1\ot t^{k}\in W$ for all $k\geq
n'+k'+l$. The claim follows.
\vskip5pt

\noindent{\bf Claim 2.} $E(\pl,\pa)\ot L\subseteq  W_n$ for all
sufficiently large $n$. \vskip5pt

By Claim 1, we suppose that $E(\pl,\pa)\ot u_1\ot t^n\subseteq W$
for all $n\geq n_1$ where $n_1\in\Z$ is fixed. Let $u_1\in
\bigoplus\limits_{i=0}^{m'}L_{-i}$ with minimal $m'\in\Z_+$. Since $L$ is
simple as a $\tg$-module, there exists $y_1,\cdots,y_s\in \g$ and
$j_1,\cdots, j_s\in \Z_+$ with $j_1+\cdots + j_s=m'$  such that
$$w_0= (y_s\ot t^{j_s})\cdots(y_1\ot t^{j_1})u_1\in
L_0\setminus\{0\}.$$ 
By induction on $r$ we see that
$$E(\pl,\pa)\ot (y_r\ot
t^{j_r})\cdots (y_1\ot t^{j_1})u_1\ot t^{n+j_1+...+j_r}\subseteq  W,
\,\, \forall\ n\ge n_1.$$ So
$$E(\pl,\pa)\ot
w_0\ot t^{n+m'}\subseteq  W, \,\,\forall\ n\ge n_1.$$ From Lemma
\ref{L irre} we know that $U(\g\ot\C[t^{-1}])w_0=L$. One can easily
deduce that
$$E(\pl,\pa)\ot
L\ot t^{n}\subseteq  W,  \,\,\forall\ n\ge n_1+m'.$$
Claim 2 follows. Hence the lemma is true.
\end{proof}

The following corollary will be used later in this paper.

\begin{cor}\label{coro} Let $\pa\in\widetilde{\C^m}$ and
$\pl\in (\h^*)^m$ such that $E(\pl, \pa)$ is finite
dimensional and $E(\pl, \pa, b)$ is irreducible.    Let $W$ be a submodule of ${\mathcal E}(\pl,\pa,b,L)$, $n_0\in\Z$ and $u_0\in L_0\setminus\{0\}$.
 If $E(\pl,\pa)\ot u_0\ot t^n\subseteq W$ for all $n\geq n_0$ then $E(\pl,\pa)\ot L\ot t^n\subseteq W$ for all $n\geq n_0$.
\end{cor}
\begin{proof}
Taking $u_1=u_0$ and $n_1=n_0$ in the the proof of Claim 2 of
Theorem \ref{cal E submodule}, we can deduce the result.
\end{proof}

Noticing that $\tn_-=(\g\ot t^{-1}\C[t^{-1}])\oplus\n_-$, we have
$U(\tn_-)=U(\g\ot t^{-1}\C[t^{-1}])\ot U(\n_-).$   Fix
any nonzero $u_0\in L_0$. Then we can define a linear map
\begin{equation}\label{phi}\phi: E(\pl,\pa)\ot U(\tn_-)\rightarrow
E(\pl,\pa)\ot L_0,\end{equation} by $\phi(v\ot xy)=(x\circ v)\ot
yu_0$ for all $v\in E(\pl,\pa)$, $x\in U(\g\ot t^{-1}\C[t^{-1}])$
and $y\in U(\n_-)$, where the action $\circ$ is defined by $1\circ
v=v$ and $$(x_1x_2\cdots x_s)\circ v=(-1)^{s}x_s\cdots x_2x_1v, \
\forall \ x_i\in\g\ot t^{-1}\C[t^{-1}].$$  Note that $L_0$ is an
irreducible $\g$-module. For any $x\in\g\ot t^{-1}\C[t^{-1}], y\in
U(\tn_-)$ and $v\in E(\pl,\pa)$, we can compute that $\phi(xv\ot
y+v\ot xy)=0$.

Both $E(\pl,\pa)\ot U(\tn_-)$ and $E(\pl,\pa)\ot L_0$ are
$U(\n_-)$-modules in a natural way, and we have

\begin{lem}\label{phi is g-homo} The linear map $\phi$ is a $U(\n_-)$-homomorphism.
\end{lem}

\begin{proof} Take any $v\in E(\pl,\pa)$, $x_i\in\g\ot t^{-1}\C[t^{-1}], i=1,\cdots, s, y\in U(\n_-)$ and $x'\in\n_-$. We have
$$\phi(x'(v\ot x_1\cdots x_sy))=\phi(x'v\ot x_1\cdots x_sy)+\phi(v\ot x'x_1\cdots x_sy).$$
Noticing that $\phi(x'v\ot x_1\cdots x_sy)=(-1)^sx_s\cdots x_1x'v\ot yu_0$
and
\begin{equation*}
  \begin{split}
   & \phi(v\ot x'x_1\cdots x_sy)\\
 = & \phi(v\ot x_1\cdots x_sx'y+v\ot \sum_{i=1}^sx_1\cdots[x',x_i]\cdots x_sy)\\
 = & (-1)^sx_s\cdots x_1v\ot x'yu_0+(-1)^s\sum_{i=1}^sx_s\cdots[x',x_i]\cdots x_1 v\ot yu_0\\
 = & (-1)^sx_s\cdots x_1v\ot x'yu_0+(-1)^s[x',x_s\cdots x_1] v\ot yu_0,
   \end{split}
\end{equation*}
we get that
\begin{equation*}
  \begin{split}
   \phi(x'(v\ot x_1\cdots x_sy)) &= (-1)^sx_s\cdots x_1v\ot x'yu_0+(-1)^sx'x_s\cdots x_1v\ot yu_0\\
                                 &= (-1)^sx'(x_s\cdots x_1v\ot yu_0)\\
                                 &= x'\phi(v\ot x_1\cdots x_sy).
   \end{split}
\end{equation*}
as desired.
\end{proof}

\begin{thm}\label{cal E irre} Let $\pa\in\widetilde{\C^m}$ and
$\pl\in (\h^*)^m$ such that $E(\pl, \pa)$ is finite
dimensional and $E(\pl, \pa, b)$ is irreducible. Let $L=\tilde V(\gamma)$ be an irreducible highest weight $\tg$-module with highest weight vector $u_0$ of highest weight $\gamma\in \th^*$. Assume that $J=\Ann_{U(\tn_-)}(u_0)$. Then
$E(\pl,\pa,b)\ot L$ is irreducible if and only if
$\phi(E(\pl,\pa)\ot J)=E(\pl,\pa)\ot L_0$.
\end{thm}

\begin{proof} Consider $\calE(\pl,\pa, b, L)$ instead of $E(\pl,\pa,b)\ot L$.
For convenience denote $E=E(\pl,\pa)$ and $\calE=\calE(\pl,\pa, b,
L)$. Let $W=\sum\limits_{k\in\Z}W_k\ot t^k$ be a nonzero submodule of
$\calE$. By Lemma \ref{cal E submodule}, there is $n\in\Z$ such that
$E\ot L\ot t^i\subseteq W$ for all $i\geq n+1$. \vskip5pt

\noindent{\bf Claim 1.}  For any $v\in E$, $x\in U(\g\ot
t^{-1}\C[t^{-1}])$ and $y\in U(\n_-)$, we have $v\ot xyu_0\equiv
(x\circ v)\ot yu_0=\phi(v\ot xy) \mod W_n$. \vskip5pt

By linearity it suffices to prove the claim for $x=(x_1\ot
t^{-i_1})\cdots (x_s\ot t^{-i_s})$ with $x_j\in \g$ and $i_j\in\N$.
If $s=0$ the result is trivial. Now suppose the result is true for
all $v\in E$, $x=(x_1\ot t^{-i_1})\cdots (x_s\ot t^{-i_s})$ and
$y\in U(\n_-)$. Set $u=yu_0$. Then for any $x_0\in\g$ and $i_0\in\N$
we have that
$$(x_0\ot t^{-i_0})(v\ot xu\ot t^{n+i_0})=\big(v\ot (x_0\ot t^{-i_0})xu
+(x_0\ot t^{-i_0})v\ot xu\big)\ot t^n$$ lies in $W$ and hence
\begin{equation*}\begin{split}
v\ot (x_0\ot t^{-i_0})xyu_0
& \equiv -(x_0\ot t^{-i_0})v\ot xyu_0\mod W_n\\
& \equiv -x\circ (x_0\ot t^{-i_0})v\ot yu_0\mod W_n\\
& \equiv \big((x_0\ot t^{-i_0})x\big)\circ v\ot yu_0\mod W_n.
\end{split}\end{equation*} The claim follows by induction.\vskip 5pt

First suppose that $\phi(E\ot J)=E\ot L_0$. Then for any $v\in
E(\pl,\pa)$,
 we can find $x_i\in J$ and $v_i\in E$ such that
$\phi(\sum\limits_{i=1}^s v_i\ot x_i)=v\ot u_0$. By Claim 1, we have $$v\ot
u_0=\sum_{i=1}^s \phi(v_i\ot x_i)\equiv\sum_{i=1}^s v_i\ot x_i u_0=0
\mod W_n,$$ that is $E\ot u_0\in W_n$. By Corollary \ref{coro}, we obtain that
$W_n=E\ot L$ and by induction on $n$ we conclude that $W_i=E\ot L$
for all $i\in\Z$, that is $W=\calE$. Therefore $\calE$ is
irreducible. \vskip 5pt

Now suppose that $\phi(E\ot J)\ne E\ot L_0$. Then we have an induced
linear map
$$\bar{\phi}: E\ot U(\tn_-)\rightarrow (E\ot L_0)/\phi(E\ot J),\quad v\ot
x\mapsto\overline{\phi(v\ot x)}.$$ Clearly
 $\bar{\phi}(E\ot J)=0$. Noticing that  $L=U(\tn_-)u_0\cong
 U(\tn_-)/J$,
we get the induced linear map $$\widehat{\phi}: E\ot L\rightarrow
(E\ot L_0)/\phi(E\ot J),\quad v\ot xu_0\mapsto\overline{\phi(v\ot
x)}.$$ 

Without loss of generality we may suppose that $W$ is the submodule of $ \calE$ generated by
$E\ot L\ot t^i, i\geq n+1$. Then any elements in $W_n\ot t^n$ is a
sum of elements of the form
$$x(v\ot yu_0\ot t^{n+i}),\quad x\in U(\hg)_{-i}, v\in E, y\in U(\tn_-), i\in\N,$$
and, by the PBW Theorem, is a sum of elements of the form
$$x(v\ot yu_0\ot t^{n+i}),\quad x\in \tn_-, v\in E, \deg(x)=-i, y\in U(\tn_-), i\in\N.$$
While in this case $x(v\ot yu_0\ot t^{n+i})=(xv\ot yu_0+v\ot
xyu_0)\ot t^n$ and $\widehat{\phi}(xv\ot yu_0+v\ot
xyu_0)=\overline{\phi(xv\ot y+ v\ot xy)}=0$. As a result we obtain
that $\widehat{\phi}(W_n)=0$, or equivalently,
$W_n\subseteq\ker\widehat{\phi}\neq E\ot L$ since $\widehat{\phi}$
is surjective and nonzero. Thus $W$ is a proper submodule of
$\calE$, which has to be reducible.
\end{proof}

The above result is not easy  to use since we generally do not know what
$\Ann_{U(\tn_-)}(u_0)$ is for a non-integrable
irreducible highest weight $\tg$-module $\tilde V(\gamma)$ with
highest weight $\gamma\in\th^*$. Fortunately, our purpose is  to handle the case when $\gamma$ is dominant.

From Proposition 9.9 in [K], there are a lot of irreducible highest
weight modules $\tilde V(\gamma)$ that are also Verma modules over
$\tg$. For these irreducible Verma modules $\tilde V(\gamma)$, we
know that 
$J=\Ann_{U(\tn_-)}(u_0)=0$, where $u_0$ is a highest weight vector. Applying Theorem \ref{cal E irre} we obtain

\begin{cor}\label{cal E verma} Let $\pa\in\widetilde{\C^m}$ and
$\pl\in (\h^*)^m$ such that $E(\pl, \pa)$ is finite
dimensional and $E(\pl, \pa, b)$ is irreducible. Let
$\tilde V(\gamma)$ be an irreducible Verma module over $\tg$. Then
$E(\pl,\pa, b)\ot\tilde V(\gamma)$ is always reducible.
\end{cor}

On the other hand, we do have a clear description for
$\Ann_{U(\tn_-)}(u_0)$ if $\gamma$ is dominant. Next we will concentrate on this very
important case which is of most interest to us.

Recall from Section \ref{pre} the notation of root system  relative
to the triangular decomposition of $\g$ and $\tg$ given there. Let
$\a_i, i=1,\cdots,l$ be the simple roots of $\g$, $\theta$ be the
longest root of $\g$ and $\check{\a}_i, \check{\theta}$ be the
corresponding coroots. Let $\a_0=\delta-\theta$. Then $\a_i,
i=0,1,\cdots, l$ are the simple roots of $\tg$. It is well known
that $f_0=e_{\theta}\ot t^{-1}$ and $\check{\a}_0=K-\check{\theta}$.
Any weight $\gamma\in\th^*$ is of the form
$\gamma=\Lambda+k\Lambda_0$, where $\Lambda\in\h^*$ is a weight of
$\g$, $k\in\C$ and $\Lambda_0$ is the root defined by
$\langle\Lambda_0, \check{\a}_i\rangle=\delta_{i,0},
\langle\Lambda_0, d\rangle=0$. Denote by $\tilde V(\gamma)$ the
highest weight $\tg$-module with highest weight $\gamma$ and denote
by $V(\Lambda)$ the  $\g$-submodule of $\tilde V(\gamma)$ generated by the highest weight vector (that is a highest weight $\g$-module  with highest weight
$\Lambda$).

It is well known that $\gamma=\Lambda+k\Lambda_0$ is dominant if and
only if $\Lambda$ is a dominant weight relative to $\g$ and $k\in\N$
satisfies $k\ge \Lambda(\check{\theta})$.

From now on, we assume that $L=\tilde V(\gamma)$ with
$\gamma=\Lambda+k\Lambda_0$ being dominant. It is easy to see that
$L_0=V(\Lambda)$ is just the finite dimensional irreducible highest
weight $\g$-module with highest weight $\Lambda$. Moreover,
$\tilde{V}(\gamma)$ and $V(\Lambda)$ have common highest weight
vector,  and we choose one of them, say $u_{\Lambda}$,
and define the function $\phi$ as in \eqref{phi} replacing $u_0$
with $u_{\Lambda}$. Denote $\mathcal{E}(\pl,\pa, b,
\gamma)=\mathcal{E}(\pl,\pa, b, \tilde{V}(\gamma))$ for convenience.
We have noticed the $\tg$-module isomorphism $\mathcal{E}(\pl,\pa,
b, \gamma)\cong E(\pl,\pa,b)\ot \tilde{V}(\gamma)$.

\begin{thm}\label{cal E irre int} Let $\pa\in\widetilde{\C^m}$ and
$\pl\in (\h^*)^m$ such that $E(\pl, \pa)$ is finite
dimensional and $E(\pl, \pa, b)$ is irreducible. Suppose that $\Lambda+k\Lambda_0\in \th^*$ is dominant. 
Then $E(\pl,\pa,b)\ot \tilde{V}(\Lambda+k\Lambda_0)$ is irreducible
if and only if
\begin{equation}\label{phi()}\phi\Big(E(\pl,\pa)\ot U(\n_-)(e_{\theta}\ot
t^{-1})^{k-\langle\Lambda |
\check{\theta}\rangle+1}\Big)=E(\pl,\pa)\ot
V(\Lambda).\end{equation}
\end{thm}

\begin{proof} Let $u_{\Lambda}$ be a common highest weight vector of $V(\Lambda+k\Lambda_0)$ and $V(\Lambda)$. It is well known that
$J=\Ann_{U(\tn_-)}(u_{\Lambda})$ is the left ideal of $U(\tn_-)$
generated by $f_i^{\langle\Lambda+k\Lambda_0 |
\check{\a}_i\rangle+1}$ for $i=0,1,\cdots,l$. Then
$$\phi(E(\pl,\pa)\ot J)=\phi\Big(E(\pl,\pa)\ot \sum_{i=0}^l U(\g\ot
t^{-1}\C[t^{-1}])U(\n_-)f_i^{\langle\Lambda+k\Lambda_0 |
\check{\a}_i\rangle+1}\Big).$$ Note that
\begin{equation*}\begin{split}
& \phi\Big(E(\pl,\pa)\ot \sum_{i=1}^l U(\g\ot
t^{-1}\C[t^{-1}])U(\n_-)f_i^{\langle\Lambda+k\Lambda_0
| \check{\a}_i\rangle+1}\Big)\\
= & \sum_{i=1}^l U(\g\ot t^{-1}\C[t^{-1}])\circ E(\pl,\pa)\ot U(\n_-)f_i^{\langle\Lambda | \check{\a}_i\rangle+1}u_0= 0.\\
\end{split}\end{equation*}
We have
\begin{equation*}\begin{split}
    \phi(E(\pl,\pa)\ot J)
= & \phi\Big(E(\pl,\pa)\ot U(\g\ot t^{-1}\C[t^{-1}])U(\n_-)f_0^{\langle\Lambda+k\Lambda_0 | \check{\a}_0\rangle+1}\Big)\\
= & \phi\Big(U(\g\ot t^{-1}\C[t^{-1}])\circ E(\pl,\pa)\ot U(\n_-)f_0^
{k-\langle\Lambda| \check{\theta}\rangle+1}\Big)\\
= & \phi\Big(E(\pl,\pa)\ot U(\n_-)f_0^{k-\langle\Lambda| \check{\theta}\rangle+1}\Big).\\
\end{split}\end{equation*}
Now this theorem follows from Theorem \ref{cal E irre}. \end{proof}

The conditions in Theorem \ref{cal E irre int} is still not very
easy to verify. Now we can give a further criterion which we can
compute explicitly.

\begin{thm}\label{cal E irre simpler} Let $\pa\in\widetilde{\C^m}$ and
$\pl\in (\h^*)^m$ such that $E(\pl, \pa)$ is finite
dimensional and $E(\pl, \pa, b)$ is irreducible. Suppose that
$\tilde V(\Lambda+k\Lambda_0)$ is an irreducible integrable highest
weight $\tg$-module, where $\Lambda\in\h^*$  and $k\in\Z_+$. Let $u_{\Lambda}$ be the common
highest weight vector of $\tilde V(\Lambda+k\Lambda_0)$ and $ V(\Lambda)$.
Then $E(\pl,\pa,b)\ot \tilde{V}(\Lambda+k\Lambda_0)$ is irreducible
if and only if \begin{equation}\label{U(n-)}U(\n_-)\big((e_{\theta}\ot
t^{-1})^{k-\langle\Lambda | \check{\theta}\rangle+1}E(\pl,\pa)\ot
u_\Lambda\big)=E(\pl,\pa)\ot V(\Lambda).\end{equation}
\end{thm}

\begin{proof} As before, denote $\gamma=\Lambda+k\Lambda_0$ and we consider $\calE=\calE(\pl,\pa, b, \gamma)$ instead of $E(\pl,\pa,b)\ot \tilde{V}(\Lambda+k\Lambda_0)$.
Denote $E=E(\pl,\pa)$, $f_0=e_{\theta}\ot t^{-1}$ and
$k_0=k-\langle\Lambda | \check{\theta}\rangle+1$ for short.

First suppose that $U(\n_-)(f_0^{k_0}E\ot u_\Lambda)=E\ot
V(\Lambda)$. Let $W$ be a nonzero submodule of $\calE$. As in the
proof of Theorem \ref{cal E irre}, we have $W=\sum\limits_{i\in\Z}W_i\ot
t^i, W_i\subseteq E\ot \tilde V(\gamma)$, and by Lemma \ref{cal E
submodule}, there is $n\in\Z$ such that $W_i=E\ot \tilde V(\gamma)$
for all $i\geq n+1$.

By Claim 1 of Theorem \ref{cal E irre}, we have that, for any $v\in
E$,
$$(-1)^{k_0}f_0^{k_0}v\ot u_\Lambda=\phi(v\ot f_0^{k_0})\equiv v\ot f_0^{k_0}u_{\Lambda}=0\mod W_n.$$
So $f_0^{k_0}E\ot u_\Lambda\subseteq W_n$ and hence $E\ot
V(\Lambda)=U(\n_-)(f_0^{k_0}E\ot u_\Lambda)\subseteq W_n$, which
implies $E\ot \tilde V(\gamma)=W_n$ by Corollary \ref{coro}. By
induction, we can obtain $W_i=E\ot \tilde V(\gamma)$ for all
$i\in\Z$. Therefore $\calE$ is irreducible.

Next we suppose that $\calE$ is irreducible. To complete the proof
it suffices to show the following claim, thanks to Theorem \ref{cal
E irre int} and the fact that $
U(\n_-)(f_0^{k_0}E\ot u_{\Lambda})\subseteq E(\pl,\pa)\ot V(\Lambda)$.\vskip5pt

\noindent{\bf Claim 1.} $\phi(E\ot U(\n_-)f_0^{k_0})\subseteq
U(\n_-)(f_0^{k_0}E\ot u_{\Lambda})$.\vskip5pt

Take any $x=x_1x_2\cdots x_r\in U(\n_-)$ with $x_i\in \n_-$. We will
prove
\begin{equation}\label{last}
\phi(E\ot x_1\cdots x_rf_0^{k_0})\subseteq U(\n_-)(f_0^{k_0}E\ot
u_{\Lambda})
\end{equation}
by induction on $r$. The assertion is obvious for $r=0$. Now suppose
that \eqref{last} holds for $r\in\Z_+$. Then for any $v\in E$ and
$x_0\in \n_-$, we have
\begin{equation*}\begin{split}
    \phi(v\ot x_0xf_0^{k_0}) & =\phi(x_0(v\ot xf_0^{k_0})-x_0v\ot xf_0^{k_0})\\
                             & =x_0\phi(v\ot xf_0^{k_0})-\phi(x_0v\ot xf_0^{k_0}),
\end{split}\end{equation*} since $\phi$ is a $U(\n_-)$-module homomorphism. Then
by induction hypothesis, we have $\phi(v\ot xf_0^{k_0}),
\phi(x_0v\ot xf_0^{k_0})\in U(\n_-)(f_0^{k_0}E\ot u_{\Lambda})$.
Hence $\phi(v\ot x_0xf_0^{k_0})\in U(\n_-)(f_0^{k_0}E\ot
u_{\Lambda})$. The claim follows.
\end{proof}

It is interesting to note that the left hand side of
\eqref{U(n-)} is a $\g$-module (even if the equality in \eqref{U(n-)} does not
hold), while the vector space inside $\phi(\cdot)$ on the left hand
side of \eqref{phi()} is not a $\g$-module in general.

From Theorem \ref{cal E irre simpler}, the condition \eqref{U(n-)} is
computable since everything is within the finite dimensional
$\g$-module $$E(\pl,\pa)\ot V(\Lambda)=V(\lambda_1)\ot V(\lambda_2)\ot\cdots\ot V(\lambda_m)\ot V(\Lambda).$$
Now the
main task is to compute the highest weight vectors with respect to
$\g$ in the submodule $U(\n_-)\left((e_{\theta}\ot t^{-1})^{k-\langle\Lambda |
\check{\theta}\rangle+1}E(\pl,\pa)\ot u_\Lambda\right)$.

As an application of  the above theorem, we can obtain the
irreducibility for $\tilde{\sl_2}$ explicitly, recovering the result
of Adamovic \cite{Ad2}. Choose a standard basis $\{e, f, h\}$ of
$\sl_2$ and let $\eps$ be the fundamental weight.

\begin{cor}\label{coro sl_2}
For any $a, b\in\C$ and $i,j, k\in\Z_+$ with $a\neq0$ and $k\geq i$, the
$\tilde{\sl}_2$-module $E(j\eps,a,b)\ot \tilde V(i\eps+k\Lambda_0)$ is
irreducible if and only if $k<j$.
\end{cor}

\begin{proof} We know that $\g=\sl_2=\spn\{e, f, h\}$,   $e_0=f\ot t$ and $f_0=e\ot t^{-1}$.
Denote  by $v$ and $u$ the highest weight
vectors of the $\g$-module $V(j\eps)$ and the $\tg$-module
$\tilde V(i\eps+k\Lambda_0)$ respectively. Denote $W=V(j\eps)\ot V(i\eps)$
and $W'=U(\n_-)\big(f_0^{k-i+1}E(j\eps, a)\ot u\big)$ for
convenience. By Theorem \ref{cal E irre simpler}, we see that
$E(j\eps,a,b)\ot \tilde V(i\eps+k\Lambda_0)$ is irreducible if and only if
$W'=W$ if and only if $\C[f]\left( f_0^{k-i+1}E(j\eps, a)\ot u\right)=\C[f]\left(e^{k-i+1}V(j\eps)\ot u\right)$ contains all
highest weight vectors of $V(j\eps)\ot V(i\eps)$.

First note that $e^{k-i+1}E(j\eps,a)=0$ and $W'\neq W$ if $j<k-i+1$.

Next we suppose $j\geq k-i+1$. In this case, we have
$$f_0^{k-i+1}E(j\eps, a)=\spn\{ v, fv, \cdots, f^{j-(k-i+1)}v\}.$$
Let $W'_p$ (resp. $W_p$) be the weight space of $W'$ (resp. $W$) of
weight $p\epsilon$. For $s=0,1,\cdots, \min\{i,j\}$, we can deduce
that
$$\dim W'_{i+j-2s}=
\left\{\begin{array}{ll}
s+1,   & \text{if}\ s\leq j-(k-i+1),\\\\
j-k+i, & \text{if}\ s>j-(k-i+1).
\end{array}\right.$$
On the other hand, by the Clebsch-Gordan's formula, we have
$$W=V((i+j)\eps)\oplus V((i+j-2)\eps)\oplus\cdots\oplus
V(|j-i|\eps).$$ It is clear that $W_{i+j}\oplus\cdots\oplus
W_{|i-j|}$ contains all highest weight vector of $W$ and $\dim
W_{i+j-2s}=s+1$ for all $s=0,1,\cdots,\min\{i,j\}$.

Then $W'=W$ if and only if $W'_{i+j-2s}=W_{i+j-2s}$ for all $0\leq
s\leq \min\{i,j\}$, if and only if $j-(k-i+1)\geq\min\{i,j\}$, if
and only if $k+1\leq\max\{i,j\}$. Noticing that  $\tilde
V(i\eps+k\Lambda_0)$ being dominant requires $k\geq i$, we can
conclude that $E(j\eps,a,b)\ot \tilde V(i\eps+k\Lambda_0)$ is
irreducible if and only if $k<j$.
\end{proof}

Let us continue the notations as in Theorem \ref{cal E irre
simpler}. In particular, $\gamma=\Lambda+k\Lambda_0$. Denote by
$E_{\pmu}$ the weight space of the $\g$-module $E(\pl,\pa)\otimes
V(\Lambda)$ with weight $\pmu\in(h^*)^{m+1}$. For any root $\a$ of
$\g$, let $E^{(\a)}$ be the sum of all $E_{\pmu}$ with $
\pmu=(\mu_1,\cdots,\mu_{m+1})$ satisfying
$k+\brac{\mu_1+\cdots+\mu_{m+1}}{\check{\a}}<0.$

\begin{prop}\label{chari} Let notations be as in Theorem \ref{cal E irre simpler}. Let $W=\sum\limits_{i\in\Z}W_i\ot t^i$ be a submodule of $\calE=\calE(\pl,\pa, b, \Lambda+k_0\Lambda_0)$ such that each $W_i$ is a subspace of
$E(\pl,\pa)\otimes \tilde{V}(\Lambda+k_0\Lambda)$ and $W_i=E(\pl,\pa)\ot \tilde{V}(\Lambda+k_0\Lambda)$ for all $i> n$. Then $E^{(\a)}\subseteq W_n$ for any  root $\a$ of $\g$.
\end{prop}

\begin{proof}
Suppose $E_{\pmu}\not\subseteq W_n$ for some weight $\pmu=(\mu_1,\cdots,\mu_{m+1})\in(\h^*)^{m+1}$.
Take any $v\in E_{\pmu}\setminus W_n$.

  For any   root  $\a$ of $\g$, take nonzero $x\in \g_\a$. We have
$$(x\ot t)(v\ot t^n)\in W_{n+1}.$$
That is, the image of $v\ot t^n$ in $\calE/W$ is a highest weight vector of the integrable module over the Lie algebra $\sl_2$ spanned by $x\ot t, y\ot t^{-1}$ and $\check{\a}+K$, where $y$ is a nonzero root vector in $g_{-\a}$.
So we have $\brac{\sum\limits_{i=1}^{m+1}\mu_i+k\Lambda_0+(b+n)\delta}{\check{\a}+K}\geq0$, or equivalently
$$k+\brac{\mu_1+\mu_2+\cdots+\mu_{m+1}}{\check{\a}}\geq0, \forall \alpha\in\Delta.$$
The result hence follows.
\end{proof}

By the above proposition, we see that $$U(\g)\Big(\sum_{\a\in\Delta}E^{(\a)}\big)\subseteq W_n.$$
Using this and Claim 1 in the proof of Theorem 7.6, we obtain that  $\calE$ is irreducible if and only if
$$U(\g)\Big(\sum\limits_{\a\in\Delta}E^{(\a)}+(e_{\theta}\ot t^{-1})^{k+1-\brac{\Lambda}{\check{\theta}}}E(\pl,\pa)\ot u_\Lambda\Big)=E(\pl,\pa)\ot \tilde{V}(\Lambda+k\Lambda_0).$$

The following consequence is easy to see:

\begin{cor}\label{coro2}  Under the conditions in Theorem \ref{cal E irre simpler}, the following hold.
\begin{enumerate}
\item If $k\ge\brac{\lambda_1+\lambda_2+...+\lambda_m+\Lambda}{\check{\theta}}$, then $E(\pl,\pa,b)\ot \tilde{V}(\Lambda+k\Lambda_0)$
is not irreducible.
\item If $k<\brac{\mu_1+\cdots+\mu_{m+1}}{\check{\theta}}$ for each highest weight $\pmu=(\mu_1,\cdots,\mu_{m+1})$  of the $\g$-module $E(\pl,\pa)\otimes V(\Lambda)$, then $\tg$-module
$E(\pl,\pa,b)\ot \tilde{V}(\Lambda+k\Lambda_0)$
is irreducible.
\end{enumerate}
\end{cor}

\begin{proof} If $k\ge\brac{\lambda_1+\lambda_2+...+\lambda_m+\Lambda}
{\check{\theta}}$, the left hand side of \eqref{last} is $0$. So $E(\pl,\pa,b)\ot \tilde{V}(\Lambda+k\Lambda_0)$
is not  irreducible. Part (1) follows.

For Part (2), we see from Proposition \ref{chari} that $E_{\pmu}\subseteq E^{(-\theta)}$ and the result follows from the remark below
Proposition \ref{chari}.
\end{proof}

\begin{cor}\label{coro_k_1} Let notations be as in Theorem \ref{cal E irre simpler}, and $\pl,\pa, b, \Lambda$ be given.
Then there exists $k_1\in\Z_+$, depending on  $\pl,\pa, b, \Lambda$,  such that the $\tg$-module $E(\pl,\pa, b)\ot V(\Lambda+k\Lambda_0)$, where $k\in\Z_+$ with $k\ge \langle\Lambda, {\check{\theta}}\rangle$,  is irreducible if and only if  $k< k_1$.
\end{cor}

\begin{proof} By Theorem \ref{cal E irre simpler}, it is sufficient to sow that
$$(e_{\theta}\ot t^{-1})^jE(\pl,\pa)\subseteq (e_{\theta}\ot t^{-1})^{j'}E(\pl,\pa),\ \forall\ j>j'.$$
This can be seen from the fact  that
$$(e_{\theta}\ot t^{-1})^{j-j'}E(\pl,\pa)\subseteq E(\pl,\pa),\ \forall\ j>j'.\qedhere$$
\end{proof}

Note that the integer $k_1$ in the above corollary depends only on
$\pl,\pa$ and $\Lambda$. We denote it by $\kappa(\pl,\pa,\Lambda)$.
From Corollary \ref{coro2}, we see that
$${\brac{\mu_1+\cdots+\mu_{m+1}}{\check{\theta}}}\leq \kappa(\pl,\pa,\Lambda)<
\brac{\l_1+\cdots+\l_m+\Lambda}{\check{\theta}},$$
for each highest weight $\pmu=(\mu_1,\cdots,\mu_{m+1})$  of the tensor product $\g$-module $E(\pl,\pa)\otimes V(\Lambda)$.

From Corollary \ref{coro sl_2}, we see that $\kappa(j\epsilon, a,
i\epsilon)=j$ for any $a\in\C^*$ and $i,j\in\Z_+$ in case of $m=1$
for the algebra $\tilde{\sl}_2$. Now we look at another example for
$\tilde{\sl}_2$.\smallskip

\noindent\textbf{Example.}  Let $m=2, \pl=(\epsilon,2\epsilon),
b\in\C$ and $\bold{a}=(a_1, a_2)$ with distinct $a_1, a_2\in\C^*$.
We have the $\tilde{\sl}_2$-module $E((\epsilon,2\epsilon); (a_1,
a_2); b)$ and the highest weight $\tilde{\sl}_2$-module $\tilde
V(i\epsilon+k\Lambda_0)$ where nonnegative integers $k\ge i$. Now we
consider the the $\tilde{\sl}_2$-modules $E((\epsilon,2\epsilon);
(a_1, a_2); b)\ot \tilde V(i\eps+k\Lambda_0)$.

Note that the function $\chi$ defined in \eqref{chi} is always onto.

Let $v_1, v_2$ and $u$ be the highest weight vectors of the
$\sl_2$-modules $V(\eps), V(2\eps)$ (in the tensor product of
$E((\epsilon,2\epsilon); (a_1, a_2))$) and the
$\tilde{\sl}_2$-module $\tilde V(i\eps+k\Lambda_0)$ respectively.
Then $u$ is also the highest weight vector of the $\sl_2$-submodule
$V(i\eps)$ in the $\tilde{\sl}_2$-module $\tilde
V(i\epsilon+k\Lambda_0)$.
For convenience, denote $$W=E((\eps, 2\eps); (a_1, a_2))\ot V(i\eps)\cong\big(V(3\eps)\oplus V(\eps)\big)\ot V(i\eps)$$ and $$W^{(j)}=\C[f]\big((e\ot t^{-1})^j\big(V(\eps)\ot
V(2\eps)\big)\ot u\big),\ \forall\ j\in\Z_+.$$
Then the $\tilde\sl_2$-module
$E((\epsilon,2\epsilon); (a_1, a_2); b)\ot
\tilde V(i\eps+k\Lambda_0)$ is irreducible if and only if $W^{(k-i+1)}=W$.
To compute $W^{(j)}$, we need the following formulas
$$\aligned &efv_1=v_1, \ efv_2=2v_2,  \ ef^2v_2=2fv_2,\\  &f^2v_1=f^3v_2=0, \ efu=iu.\endaligned$$
It is not hard to calculate that
$$\aligned
  & (e\ot t^{-1})\big(V(\eps)\ot V(2\eps)\big)\\
        &\hskip 10pt    = \text{span}\{v_1\ot v_2, \   v_1\ot fv_2, \   fv_1\ot v_2,\ a_2v_1\ot f^2v_2 +2a_1fv_1\ot fv_2\};\\
   & (e\ot t^{-1})^2\big(V(\eps)\ot V(2\eps)\big)=\text{span}\{v_1\ot v_2, \  a_2 v_1\ot fv_2+a_1 fv_1\ot v_2\};\\
   & (e\ot t^{-1})^3\big(V(\eps)\ot V(2\eps)\big)=\C v_1\ot v_2.
 \endaligned$$

\vskip5pt\noindent\textbf{Case 1}: $i=0$.\vskip5pt

In this case, it is obvious that $W^{(2)}=W$ and $W^{(3)}\neq W$
and hence $E((\eps, 2\eps); (a_1, a_2), b)\ot \tilde V(\Lambda_0)$ is an irreducible $\tilde{\sl_2}$-module,
while
 $E((\eps, 2\eps); (a_1, a_2), b)\ot\tilde V(2\Lambda_0)$ is not irreducible.
We conclude that $\kappa((\eps, 2\eps); (a_1, a_2); 0)=2$ for all
distinct $a_1, a_2\in\C^*$.

\vskip5pt \noindent\textbf{Case 2}: $i=1$.\vskip5pt

The vector space $\C f^2(v_1\ot v_2\ot u)+\C f(v_1\ot fv_2\ot u)+\C
f(fv_1\ot v_2\ot u)+\C(a_2v_1\ot f^2v_2\ot u +2a_1fv_1\ot fv_2\ot
u)$ contains the following linearly independent weight-$0$ vectors
$$\aligned
&v_1\ot f^2v_2\ot u, \ fv_1\ot fv_2\ot u, fv_1\ot v_2\ot fu, \
v_1\ot fv_2\ot fu.
\endaligned$$
We obtain $4\leq \dim W^{(1)}_0\leq \dim W_0=4$. Hence
$W_0=W^{(1)}_0$ and $W^{(1)}=W$. The $\tilde{\sl}_2$-module
$E((\epsilon,2\epsilon); (a_1, a_2); b)\ot \tilde V(\eps+\Lambda_0)$
is irreducible. On the other hand, we can easily see that $\dim
W^{(2)}_2\leq 2$ while $\dim W_2=3$. So $W^{(2)}\neq W$ and the
$\tilde{\sl}_2$-module $E((\epsilon,2\epsilon); (a_1, a_2); b)\ot
\tilde V(\eps+2\Lambda_0)$ is not irreducible. We obtain again that
$\kappa((\eps, 2\eps); (a_1, a_2); \eps)=2$ for all distinct $a_1,
a_2\in\C^*$.

\vskip5pt \noindent\textbf{Case 3}: $i\geq 2$.\vskip5pt

In this case, we see similarly that $\dim W^{(1)}_{i-1}= 4$ and
$\dim W_{i-1}=5$. So $W^{(1)}\neq W$ and $E((\epsilon,2\epsilon);
(a_1, a_2); b)\ot \tilde V(i\eps+k\Lambda_0)$ is not irreducible for
$k=i$ and hence for all $k\geq i$. We obtain again that
$\kappa((\eps, 2\eps); (a_1, a_2); i\eps)=i$ for all distinct $a_1,
a_2\in\C^*$. \qed\vskip5pt

Note that the irreducible modules for $i=0, k=1$ in the above example does not satisfy the conditions in Theorem 2.2 in [CP3]:
$$\aligned
&a_1\lambda_1+a_2\lambda_2+...+a_m\lambda_m\ne0, \text{ or }   \\
&(m+1)k<\langle \lambda_1+\lambda_2+...+\lambda_m+\Lambda,
\check{\theta}\rangle.\endaligned$$ It is also interesting that in
the above examples, the values of the function $\kappa$ is
independent of $a_1$ and $a_2$.

\

We will conclude the paper with the following problem:
\smallskip

\noindent\textbf{Problem}. Is there an explicit formula for the
function $\kappa(\pl,\pa,\Lambda)$ defined in Corollary 7.13 in
terms of $\pl,\pa$ and $\Lambda$.

\newpage

\begin{center}
\bf Acknowledgments
\end{center}

\noindent The first part presented in this paper was carried out
during the visit of X.G. to Wilfrid Laurier University, while the
second part was carried out during the visit of K.Z. to
Mittag-Leffler Institute of Mathematics, Sweden. X.G. is partially
supported by NSF of China (Grant 11471294) and the Foundation for
Young Teachers of Zhengzhou University (Grant 1421315071). K.Z. is
partially supported by NSF of China (Grant 11271109) and NSERC
(Grant 311907-2015). The authors like to express their thanks to
Prof. V. Mazorchuk for suggestions to the old version of this paper.

\vspace{0.5cm} \noindent X.G.: Department of Mathematics, Zhengzhou
University, Zhengzhou 450001, Henan, P. R. China. Email:
guoxq@amss.ac.cn

\vspace{0.1cm} \noindent K.Z.: Department of Mathematics, Wilfrid
Laurier University, Waterloo, ON, Canada N2L 3C5,  and College of
Mathematics and Information Science, Hebei Normal (Teachers)
University, Shijiazhuang, Hebei, 050016 P. R. China. Email:
kzhao@wlu.ca

\end{document}